\definecolor{light-gray}{gray}{0.91}
\definecolor{princetonorange}{rgb}{0.94,0.59,0.035}
\definecolor{ResnickRed}{rgb}{0.7, 0.13, 0.13}
\definecolor{ResnickBlue}{rgb}{0.0, 0.25, 0.55}
\definecolor{beige}{rgb}{0.937,0.894,0.749}
\titleformat{\section}[block]                 
{\normalfont\bfseries\huge\color{ResnickBlue}} 
{\color{ResnickRed}\thesection}             
{1em}                                       
{}                                          
\titleformat{\subsection}[block]               
{\normalfont\bfseries\Large\color{ResnickBlue}} 
{\color{ResnickRed}\thesubsection}           
{1em}                                        
{}                                           
\titleformat{\subsubsection}[block]            
{\normalfont\bfseries\large\color{ResnickBlue}} 
{\color{ResnickRed}\thesubsubsection}        
{1em}                                        
{}                                           
\titleformat{\paragraph}[block]                
{\normalfont\bfseries\color{ResnickRed}}     
{\color{ResnickRed}\theparagraph}             
{1em}                                        
{}                                           
\newcommand{\imag}{\textbf{i}}
\newcommand{\ImagPart}{\mathfrak{Im}}
\renewcommand{\Im}{\mathfrak{Im}}
\newcommand{\RealPart}{\mathfrak{Re}}
\newcommand{\error}{\mathcal{O}}
\newtheorem{definition}{Definition}[section]
\newtheorem{theorem}{Theorem}[section]
\newtheorem{remark}[theorem]{Remark}
\begin{document}
	
\begin{flushleft}
    \colorbox{black}{\textcolor{white}{\parbox{\dimexpr\textwidth-1.1em}{\vspace{0.2ex}\flushleft
			\setlength{\baselineskip}{2.3em} 
			\textbf{\huge The Complex-Step Integral Transform}\vspace{1ex}}}}
		
	\vspace{1em}  
	
	\setlength{\baselineskip}{\normalbaselineskip} 
	
	\large{Rafael Abreu}\textsuperscript{1,*}, \large{Stephanie Durand}\textsuperscript{2}, \large{Jochen Kamm}\textsuperscript{3}, \large{Christine Thomas}\textsuperscript{4}, \large{Monika Pandey}\textsuperscript{5} \\[0.5em]  
	
	\scriptsize 
	\textsuperscript{1} \textit{Institut de Physique du Globe de Paris, CNRS, Universit\'e de Paris, 75005 Paris, France} \\
	\textsuperscript{2} \textit{Univ Lyon, UCBL, ENSL, UJM, CNRS, LGL-TPE, F-69622, Villeurbanne, France} \\
	\textsuperscript{3} \textit{Geological Survey of Finland, Espoo, Findland} \\
	\textsuperscript{4} \textit{Institut f\"ur Geophysik, Westf\"alische Wilhelms-Universit\"at M\"unster, M\"unster, Germany} \\
	\textsuperscript{5} \textit{Louisiana State University, Baton Rouge, LA, USA} \\[1em]  
	\textsuperscript{*} {email: rabreu@ipgp.fr}
	
	\normalsize 
\end{flushleft}

\begin{abstract}
	Building on the well-established connection between the Hilbert transform and derivative operators, and motivated by recent developments in complex-step differentiation, we introduce the \emph{Complex-Step Integral Transform} (CSIT): a generalized integral transform that combines analytic continuation, derivative approximation, and multi-scale smoothing within a unified framework. A spectral analysis shows that the CSIT preserves phase while suppressing high-wavenumber noise, offering advantages over conventional Fourier derivatives. We discuss the roles of the real and imaginary step parameters, compare FFT-based and interpolation-based implementations, and demonstrate the method on the advection equation and instantaneous-frequency computation. Results show that the CSIT yields smoother, more robust attributes than Hilbert-based methods and provides built-in stabilization for PDE solvers. The CSIT thus represents a flexible alternative for numerical differentiation, spectral analysis, and seismic signal processing. The method opens several avenues for future work, including non-periodic implementations, adaptive parameter selection, and integration with local interpolation frameworks such as high-order Finite-Element methods.
\end{abstract}

\noindent{\footnotesize \textbf{Keywords:} Hilbert Transform, Complex-Step-Method, Wave Propagation, Instantaneous Frequency.}

\newpage

\section{Introduction}

Integral transforms are central mathematical tools across the physical sciences, providing systematic ways to recast functions into alternative domains where structure, oscillations, and scaling behavior become more transparent. Classical transforms such as the Fourier, Laplace, Radon, and Hilbert transforms underpin a wide range of techniques in signal analysis, inverse problems, and the study of differential equations. 

Within seismology, integral transforms are especially valuable because they allow recorded wavefields—often nonstationary, multicomponent, and noisy—to be analyzed in representations that emphasize their physical content. Among these, the Fourier and Hilbert transforms are arguably the most widely used \citep{bracewell1986fourier, feldman2011hilbert}. The Fourier transform enables spectral analysis, filtering, and the formulation of elastodynamic problems in the wavenumber domain \citep{bracewell1986fourier, slawinski2010waves}, while the Hilbert transform is fundamental for constructing analytic signals, computing instantaneous amplitude and phase, and deriving seismic envelopes \citep[e.g.][]{feldman2011hilbert, bowman2013hilbert, shen2015up}. 

Mathematically, many integral transforms share advantageous properties—linearity, convolution/multiplication dualities, and in many cases analytic continuation—which make them powerful for both theoretical manipulation and efficient numerical implementation (e.g., via the FFT). Practically, however, each transform brings trade-offs: periodicity assumptions, edge effects, sampling and aliasing, and sensitivity to noise must be managed through padding, tapering, filtering or alternative bases (e.g. nonperiodic Chebyshev or localized wavelet bases). These considerations motivate the development of generalized transforms and regularized spectral operators that combine analytic continuation, derivative estimation and multi-scale smoothing for robust seismic attribute extraction.

Several generalizations of the Hilbert transform, such as the fractional, bilinear and trilinear Hilbert transforms are active areas of research  \citep[e.g.][]{tao2008generalization,nabighian2001unification,lohmann1996fractional,venkitaraman2014fractional,cruz2018limited}. These generalizations are applied to problems that range from mechanical vibrations \citep{feldman2011hilbert} to telecommunications  \citep{king_2009} and they are also used in the Earth Sciences to illuminate faults and channels embedded in 3D seismic data \citep{luo2003generalized}, to achieve better noise robustness and higher resolution results to capture subtle geologic features \citep{zhang2019extended}, for automatic interpretation of potential field data \citep{nabighian1984toward}. Also, combining the Hilbert transform with the first and/or second derivative operators has been used to enhance the detection of seismic waves generated in the deep Earth \citep{chambers2005reflectivity,saki2019causes,ibourichene2018detection}.

The most common expression for the Hilbert transform is given by the following equation \citep{king_2009}
\begin{align}
	H(u(t)) = \frac{1}{\pi} \int_{-\infty}^{\infty} \frac{u(\tau)}{t-\tau} d \tau ,
	\label{eq.Hilbert_transform}
\end{align}
where $u(t)$ is the function, of parameter $t$, to be transformed. Eq. \eqref{eq.Hilbert_transform} can be understood as the convolution of the function $u(t)$ with the function $1/(\pi t)$. With a simple change of variables we can write the Hilbert transform (eq. \eqref{eq.Hilbert_transform}) as follows \citep{zygmund2002trigonometric}
\begin{align}
	H(u(t)) =- \frac{1}{\pi} \lim_{\epsilon \to 0}\int_{\epsilon}^{\infty} \frac{u(t+\tau) - u(t-\tau)}{\tau} d \tau .
	\label{eq.FD_Hilbert_transform}
\end{align}
One can note that the term inside the integral has the mathematical structure of the centered finite-difference (FD) approximation for the first-order derivative given by the following expression \citep{Thomas1995,Cohen2001,Trefethen1996,Strikwerda2004,Moczo2014}
\begin{flalign}
	f'(x) = \frac{f(x+\Delta x) - f(x-\Delta x)}{2\Delta x} +\error \left(\Delta x^2 \right) ,
	\label{eq.CenteredFD}
\end{flalign}
where $\Delta x$ refers to an increment in the $x$ direction or grid spacing and $\error (\Delta x^2)$ to the error in the approximation. A generalization of the FD approximation eq. \eqref{eq.CenteredFD} is the complex-step (CS) approximation \citep{Squire1998} given by the following expression
\begin{flalign}
	f'(x) = \frac{\ImagPart [f(x+i \Delta x)]}{\Delta x} + \error \left(\Delta x^2 \right),
	\label{eq.GCS_CSFirstOrder}
\end{flalign}
where $\ImagPart$ refers to the imaginary part and $i^2=-1$. The CS approximation eq. \eqref{eq.GCS_CSFirstOrder} has proven to have several advantages over conventional FD techniques, mainly the numerical instabilities related to subtraction cancellation errors can be avoided (note the subtraction term in the numerator in eq. \eqref{eq.CenteredFD} is absent in eq. \eqref{eq.GCS_CSFirstOrder}) and higher-order accuracy at a single step \citep{Abreu01072015,Abreu201384,anderson2001sensitivity,al2010complex,Voorhees20111146,martins2001connection,martins2003complex,dziatkiewicz2016complex,Jin2010,Kim2006177,Burg2003,Wang2006,ridout2009}.

Several generalizations to the initial CS derivative approximation \eqref{eq.GCS_CSFirstOrder} have been made (e.g. \cite{Lai2008,Cervino2003,Abokhodair2009,Abreu201384,ibrahim2013fractional,Hurkamp2015,Lantoine2012}). A simple generalization made by \cite{Abreu201384} consisted into taking a \textit{complex step} $(h+i v)$ in a strict sense in the original CS approximation \eqref{eq.GCS_CSFirstOrder} as follows
\begin{flalign}
	f'(x) = \frac{\ImagPart[f(x+h+ iv)]}{v} + \error \left(h,v^2\right),
	\label{eq.FirstOrderImag1}
\end{flalign}
where $h,v \in \mathbb{R}$ and $h,v \rightarrow 0$. Although discretization eq. \eqref{eq.FirstOrderImag1} is less accurate compared to the original CS discretization eq. \eqref{eq.GCS_CSFirstOrder}, it allows a generalization of the original CS derivative concept to a wider range of possible higher-order accurate approximations using the imaginary part of the function (for a full list see \cite{Abreu201384,Abreu01072015,ABREU2018390}).

The connection between the Hilbert-transform representation eq. \eqref{eq.FD_Hilbert_transform} and the complex-step derivative \eqref{eq.FirstOrderImag1} naturally motivates a new generalization: the \emph{Complex-Step Integral Transform} (CSIT). In this work, we introduce the CSIT, analyze its mathematical properties, and illustrate its performance for analytic and non-analytic functions. As practical applications, we demonstrate the use of the CSIT for (1) solving the advection equation and (2) computing the instantaneous frequency of seismic signals. In both cases, we highlight the advantages of the CSIT relative to existing techniques. Finally, we summarize the main contributions and discuss potential directions for future work.

\section{The Complex-Step-Integral-Transform }

\begin{definition}[Complex-Step Integral Transform (CSIT)]
	Let $f:\mathbb{R} \to \mathbb{C}$ be a function that admits an analytic continuation 
	into the rectangle
	\[
	\Omega_{H,Z}(x) = \{\, x + \eta + i\tau : -H \le \eta \le H,\; 0 \le \tau \le Z \,\},
	\]
	with $H \in \mathbb{R}$ and $Z \in \mathbb{R}^+$. 
	The \emph{Complex-Step Integral Transform} (CSIT) of $f$ is defined by
	\begin{equation}
		\label{eq.Complex_step_transform}
	(\mathcal{C}_{H,Z}f)(x):=\lim_{\varepsilon\to0^+}\frac{1}{2HZ}\int_{-H}^H\int_{\varepsilon}^Z\frac{\ImagPart[f(x+\eta+i\tau)]}{\tau}\,d\tau\,d\eta.
	\end{equation}
\end{definition}

This transform averages the local complex-step derivative $\Im[f(x+i\tau)]/\tau$ over a finite rectangular domain in the complex plane, providing a stable, multi-scale estimate of the derivative of $f$. Unlike the Hilbert transform eq. \eqref{eq.Hilbert_transform}, the CSIT transform eq. \eqref{eq.Complex_step_transform} allows the function $f(x)$ to be complex valued. Like the Hilbert transform, the physical units of the transformed function are the same as the original function $f(x)$.

\subsection{Existence and Error Bounds}

\begin{theorem}[Existence and Taylor expansion for $H,Z\to 0$]
	\label{thm:csit-symmetric}
	
	Let $f$ be analytic in the rectangular domain
\begin{align}
	\Omega := \{\, x+\eta + i\tau :\; -H\le \eta\le H,\; 0\le\tau\le Z \,\},
\end{align}
	for some $H,Z>0$, and assume there exists a constant \(M>0\) such that
\begin{align}
		\sup_{z\in\Omega} |f^{(3)}(z)| \le M.
\end{align}
 Then the improper double integral in eq. \eqref{eq.Complex_step_transform} exists and the normalized transform
\begin{align}
	(\mathcal{C}_{H,Z}f)(x)
	:= \lim_{\varepsilon\to0^+}\frac{1}{2 H Z}\int_{-H}^{H}\int_{\varepsilon}^{Z}
	\frac{\Im\big[f(x+\eta+i\tau)\big]}{\tau}\,d\tau\,d\eta ,
\end{align}
satisfies the expansion
	\begin{equation}
		(\mathcal{C}_{H,Z}f)(x) \;=\; f'(x) \;+\; \widetilde E(x;H,Z),
		\label{eq:csit-symmetric-expansion}
	\end{equation}
	with the explicit uniform remainder bound
	\begin{equation}
		\big|\widetilde E(x;H,Z)\big| \;\le\; \frac{M}{6}\,H^2 \;+\; \frac{M}{18}\,Z^2 .
		\label{eq:csit-symmetric-remainder}
	\end{equation}
	In particular, the linear-in-\(H\) bias present for one-sided averaging is eliminated by the symmetric \(\eta\)-average.
\end{theorem}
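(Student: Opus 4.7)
The plan is to apply Taylor's theorem twice --- first in the imaginary direction $\tau$ at fixed real shift $\eta$, and then in the real direction $\eta$ around $x$ --- and then integrate term by term. Two structural facts will do the heavy lifting: the symmetric $\eta$-average over $[-H,H]$ annihilates odd-$\eta$ contributions and so eliminates the linear-in-$H$ bias highlighted in the statement, while the uniform bound $\sup_{\Omega}|f^{(3)}|\le M$ controls both Taylor remainders at exactly the order needed to yield the coefficients $\tfrac{1}{6}$ and $\tfrac{1}{18}$. I will use throughout the tacit hypothesis that $f$ is real-valued on the real segment $[x-H,x+H]$: by Schwarz reflection this makes every $f^{(k)}(x+\eta)$ real for $\eta\in[-H,H]$, and without it the zeroth-order term $\Im[f(x+\eta)]/\tau$ would already fail to be integrable near $\tau=0$, so the improper limit defining the transform would not exist.

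For the first expansion I fix $\eta\in[-H,H]$ and apply Taylor's formula with integral remainder to the analytic map $\tau\mapsto f(x+\eta+i\tau)$:
\[
f(x+\eta+i\tau) = f(x+\eta) + i\tau f'(x+\eta) - \tfrac{\tau^2}{2}f''(x+\eta) + R_3(\eta,\tau), \qquad |R_3(\eta,\tau)| \le \tfrac{M\tau^3}{6}.
\]
Because $f(x+\eta)$, $f'(x+\eta)$ and $f''(x+\eta)$ are real, taking imaginary parts collapses the polynomial part to a single term and gives
\[
\frac{\Im[f(x+\eta+i\tau)]}{\tau} = f'(x+\eta) + \frac{\Im[R_3(\eta,\tau)]}{\tau}, \qquad \left|\frac{\Im[R_3(\eta,\tau)]}{\tau}\right| \le \tfrac{M\tau^2}{6}.
\]
In particular the integrand extends continuously to $\tau=0$, so the $\varepsilon\to0^+$ limit is trivial and existence is proved.

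Next I integrate in $\tau$ over $[0,Z]$ to obtain $Z^{-1}\int_0^Z \Im[f(x+\eta+i\tau)]/\tau\,d\tau = f'(x+\eta) + E_\tau(\eta)$ with $|E_\tau(\eta)|\le MZ^2/18$. Averaging in $\eta$ and using the second-order expansion $f'(x+\eta) = f'(x) + \eta f''(x) + \tfrac{\eta^2}{2}f'''(\xi_\eta)$, the linear-in-$\eta$ term vanishes by symmetry of $[-H,H]$ (this is precisely the step that removes the linear-in-$H$ bias), and the quadratic remainder is bounded by $MH^2/6$ using $|f'''(\xi_\eta)|\le M$. Summing the two remainder contributions yields the announced estimate $|\widetilde E(x;H,Z)| \le \tfrac{M}{6}H^2 + \tfrac{M}{18}Z^2$. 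The main obstacle I foresee is essentially the care point flagged above --- showing that the apparent $\tau^{-1}$ singularity is genuinely removable --- which relies on real-valuedness of $f$ on $\mathbb{R}$ (hence on real values of $f^{(k)}(x+\eta)$); once that is dispatched, the remainder of the proof reduces to two elementary one-variable Taylor estimates and one symmetric-interval cancellation.
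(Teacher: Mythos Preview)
Your proposal is correct and follows the same overall strategy as the paper --- Taylor expansion, extraction of the imaginary part, symmetric cancellation of the $\eta f''$ term, and bounding the cubic remainders via $\sup_\Omega|f^{(3)}|\le M$ --- but the decomposition is organized differently. The paper expands $f(x+\eta+i\tau)$ in a single step about $x$ with complex increment $(\eta+i\tau)$, then reads off the imaginary part of each power; you instead perform two sequential one-variable expansions, first in $\tau$ about the real base point $x+\eta$ and then in $\eta$ about $x$. Your route makes the removability of the $\tau^{-1}$ singularity more transparent (only the $i\tau f'(x+\eta)$ term survives in the polynomial part after taking imaginary parts), cleanly separates the $Z^2$ and $H^2$ error contributions from the outset, and --- by using the integral form of the remainder --- sidesteps the paper's use of a Lagrange-type mean-value remainder at a complex point $\xi\in\Omega$, which is not generally valid for complex-valued functions. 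You also make explicit the tacit hypothesis that $f$ is real on the real segment, which the paper uses silently when it asserts that $\Im[f(x)+(\eta+i\tau)f'(x)+\cdots]$ collapses to the displayed real expression. Both routes arrive at the identical constants $\tfrac{1}{6}$ and $\tfrac{1}{18}$ by the same elementary integrals.
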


\begin{proof}
	
	Since \(f\) is analytic on \(\Omega\), for each \((\eta,\tau)\in [-H,H]\times[0,Z]\) we may apply Taylor's theorem with remainder about the real point \(x\). There exists \(\xi=\xi(\eta,\tau)\in\Omega\) such that
\begin{align}
	f(x+\eta+i\tau)
	= f(x) + (\eta+i\tau)f'(x) + \frac{(\eta+i\tau)^2}{2} f''(x)
	+ \frac{(\eta+i\tau)^3}{6} f^{(3)}(\xi).
\end{align}
	Taking imaginary parts and dividing by \(\tau\) (for \(\tau>0\)) gives
\begin{align}
		\frac{\Im[f(x+\eta+i\tau)]}{\tau}
	= f'(x) + \eta f''(x) + \frac{1}{2}\eta^2 f^{(3)}(\xi) - \frac{1}{6}\tau^2 f^{(3)}(\xi).
\end{align}
	Integrate this identity in \(\tau\in[\varepsilon,Z]\) and then in \(\eta\in[-H,H]\). The integrand admits a removable singularity at \(\tau=0\) (by analyticity), hence the improper limit \(\varepsilon\to0^+\) exists; interchanging limits and integrals is justified by the uniform bound on \(f^{(3)}\).
	
	Performing the \(\tau\)-integration and dividing by \(Z\) yields, for each fixed \(\eta\),
\begin{align}
	\frac{1}{Z}\int_{0}^{Z}\frac{\Im[f(x+\eta+i\tau)]}{\tau}\,d\tau
	= f'(x) + \eta f''(x) + \frac{1}{2}\eta^2 f^{(3)}(\tilde\xi(\eta)) - \frac{1}{18} Z^2 f^{(3)}(\hat\xi(\eta)),
\end{align}
	for suitable points \(\tilde\xi(\eta)$, with $\hat\xi(\eta)\in\Omega\) (mean-value forms of the remainder). Now average over \(\eta\in[-H,H]\) with normalization \(1/(2H)\). The term linear in \(\eta\) vanishes by symmetry:
\begin{align}
		\frac{1}{2H}\int_{-H}^{H} \eta \, d\eta = 0,
\end{align}
	while the constant \(f'(x)\) remains unchanged. The averaged quadratic remainder is bounded by
\begin{align}
		\left|\frac{1}{2H}\int_{-H}^{H}\frac{1}{2}\eta^2 f^{(3)}(\tilde\xi(\eta))\,d\eta\right|
	\le \frac{1}{2H}\int_{-H}^{H}\frac{1}{2}\eta^2 M\,d\eta
	= \frac{M}{6} H^2,
\end{align}
	since \(\int_{-H}^{H}\eta^2\,d\eta = 2 H^3/3\). The \(\tau\)-remainder contributes at most
\begin{align}
		\left|\frac{1}{2H}\int_{-H}^{H}\frac{1}{18} Z^2 f^{(3)}(\hat\xi(\eta))\,d\eta\right|
	\le \frac{M}{18} Z^2.
\end{align}
	Combining these bounds yields eq. \eqref{eq:csit-symmetric-remainder} and completes the proof.
\end{proof}

\begin{remark}
	The theorem shows that, for analytic \(f\), the  CSIT \(\mathcal{C}_{H,Z}\) recovers the first derivative to leading order with no \(\error(H)\) bias; the dominant truncation errors are \(\error(H^2)\) and \(\error(Z^2)\). In practice it is therefore natural to choose \(H\) and \(Z\) of comparable size \(\error(\Delta x)\) to obtain an overall second-order truncation error while maintaining the smoothing/regularization properties of the operator.
\end{remark}

\subsection{Operator Properties}

Assume $f$ is analytic in the symmetric rectangle
\begin{align}
	\Omega := \{\,x+\eta+i\tau : -H \le \eta \le H, \; 0 \le \tau \le Z\,\},
\end{align}
and satisfies the uniform bounds
\begin{align}
	\sup_{z\in \Omega} |f^{(3)}(z)| \le M, \qquad
	\sup_{x-H\le y \le x+H} |f'(y)| \le A, \qquad
	\sup_{x-H\le y \le x+H} |f''(y)| \le B.
\end{align}

Then the Complex-Step Integral Transform $\mathcal{C}_{H,Z}$ defined in eq. \eqref{eq.Complex_step_transform} is well-defined and satisfies the following properties:
\begin{itemize}
	\item \textbf{Linearity:}
	For any $\alpha,\beta \in \mathbb{C}$ and functions $f,g$ satisfying the hypotheses,
\begin{align}
	\mathcal{C}_{H,Z}(\alpha f + \beta g)
	= \alpha\,\mathcal{C}_{H,Z} f + \beta\,\mathcal{C}_{H,Z} g,
\end{align}
	directly from the linearity of the integral and of the limit $\varepsilon\to0^+$.
	
	\item \textbf{Translation invariance:} For any $x_0 \in \mathbb{R}$,
\begin{align}
	\mathcal{C}_{H,Z}[f(\cdot - x_0)](x) = \mathcal{C}_{H,Z} f(x - x_0),
\end{align}
5
	by change of variable in both $\eta$ and $\tau$.
	
	\item \textbf{Boundedness:} 	From Theorem~\ref{thm:csit-symmetric} and the uniform bounds on $f',f'',f^{(3)}$,
\begin{align}
		\|\mathcal{C}_{H,Z} f\|_\infty
	\;\le\;
	A \;+\; \frac{M}{6}\,H^2 \;+\; \frac{M}{18}\,Z^2.
\end{align}
	In particular, $\mathcal{C}_{H,Z}$ acts as a bounded linear operator from this analytic–$C^3$ space into $L^\infty(\mathbb{R})$.
	
	\item \textbf{Derivative-like nature:} Expanding $f$ locally by Taylor’s theorem and applying the symmetric integral shows
\begin{align}
	\mathcal{C}_{H,Z} f(x) = f'(x) + \widetilde{E}(x;H,Z), \qquad |\widetilde{E}(x;H,Z)| \le \frac{M}{6}H^2 + \frac{M}{18}Z^2.
\end{align}
	Thus $\mathcal{C}_{H,Z}$ approximates the first derivative with a second-order truncation error in both parameters \(H\) and \(Z\).
\end{itemize}

\section{Applications}

\subsection{Analytical Examples}

To develop basic understanding of the introduced integral transform, we next apply it to classical analytic functions. Table~\ref{tb.CSIT_common_Functions} summarizes the results, that have been obtained using explicit calculations presented in \citet{schaums_76} and \citet{king_2009}, 
together with the classical sine- and hyperbolic-sine integrals defined as
\begin{align}
	\mathrm{Shi}(z) = \int_0^z \frac{\sinh t}{t}\,dt, 
	\qquad 
	\mathrm{Si}(z) = \int_0^z \frac{\sin t}{t}\,dt.
	\label{eq.Shi_Si_function}
\end{align}
As we will next see, these functions provide compact analytic representations for the $\tau$-integration in the CSIT, 
while the $\eta$-integration over $[-H,H]$ acts as a symmetric spatial average that removes bias and introduces smooth multi-scale filtering.

\paragraph{Sine and cosine functions}

For $f(x)=\sin x$ or $f(x)=\cos x$, the CSIT behaves as a regularized, smoothed derivative operator—similar in phase to the Hilbert transform but scaled by the factor $\mathrm{Shi}(Z)$ and the symmetric averaging kernel (see Fig. \ref{fig:CSIT_Fourier}):
\[
(\mathcal{C}_{H,Z}\sin x)(x)
=
\mathrm{Shi}(Z)\,\frac{\sin(H)}{H}\,\cos x,
\qquad
(\mathcal{C}_{H,Z}\cos x)(x)
=
-\mathrm{Shi}(Z)\,\frac{\sin(H)}{H}\,\sin x.
\]
In the limit $H\to0$, $\sin(H)/H \to 1$, recovering the classical $\pi/2$ phase shift of the Hilbert transform.

\paragraph{Exponential function}

For $f(x)=e^x$, the $\tau$-integration yields the sine integral $\mathrm{Si}(Z)$, and the symmetric $\eta$-integration provides a finite difference average:
\[
(\mathcal{C}_{H,Z}e^x)(x)
=
\frac{\mathrm{Si}(Z)}{2H}\big(e^{x+H} - e^{x-H}\big),
\]
showing that $\mathcal{C}_{H,Z}$ remains well-defined even when the Hilbert transform diverges.

\paragraph{Gaussian pulse}

For $f(x)=e^{-x^2}$, analytic continuation through the error function gives
\[
(\mathcal{C}_{H,Z}e^{-x^2})(x)
=
\frac{1}{2H}\!\int_{-H}^H
\!\!\left[-\frac{\sqrt{\pi}}{4}
\big(
\mathrm{erf}(x+\eta-iZ)
+\mathrm{erf}(x+\eta+iZ)
-2\,\mathrm{erf}(x+\eta)
\big)\right]d\eta,
\]
illustrating both its derivative-like effect and its smoothing nature.

\paragraph{Complex Exponential}

For the complex exponential $f(x) = e^{i x}$, both the Hilbert transform and the CSIT act as phase-shifting operators. 
However, their amplitude behavior and regularity differ fundamentally.

The Hilbert transform yields
\begin{align}
	\mathcal{H}[e^{i x}] = -\,i\, e^{i x},
\end{align}
which represents a \emph{pure quadrature phase shift} of $-\pi/2$ without any amplitude modification. In contrast, the CSIT produces
\begin{align}
	\mathcal{C}_{H,Z}[e^{i x}] = i\,\mathrm{Shi}(Z)\,\frac{\sin(H)}{H}\, e^{i x},
\end{align}
which maintains the same $\pi/2$ phase shift (via the factor $i$), but includes an additional \emph{real-valued amplitude modulation} (see Fig. \ref{fig:CSIT_Fourier}) given by 
\begin{align}
	A_{H,Z} = \mathrm{Shi}(Z)\,\frac{\sin(H)}{H}.
\end{align}
This amplitude factor regularizes the high-wavenumber behavior and introduces smooth, scale-dependent attenuation.

Hence, while the Hilbert transform acts as an exact, non-dissipative phase operator, the CSIT generalizes it into a \emph{regularized, multi-scale analytic continuation}: it retains the correct phase relation ($\pi/2$ shift) but applies gentle smoothing through the $\sin(H)/(H)$ kernel in real space and the hyperbolic-sine integral $\mathrm{Shi}(Z)$ in complex space.

Note that in the asymptotic limit $H,Z \to 0$, the CSIT converges to the Hilbert transform,
\begin{align}
	\lim_{H,Z \to 0} \mathcal{C}_{H,Z}[e^{i x}] = -\,i\,e^{i x}
\end{align}
demonstrating that, for this case, the CSIT serves as a regularized, scale-aware extension of the classical Hilbert operator.

\begin{table}
	\footnotesize
	\caption{Comparison between the CSIT $\mathcal{C}_{H,Z}$ and the Hilbert transform for classical analytic functions.}
	\label{tb.CSIT_common_Functions}
	\begin{center}
		{\tabulinesep=1.5mm
			\begin{tabu}{ | p{2.6cm} | p{6.3cm} | p{5.5cm} | }
				\hline
				\textbf{Function} 
				& \textbf{CSIT $\mathcal{C}_{H,Z}$ (symmetric form)} 
				& \textbf{Hilbert Transform} 
				\\ \hline
				$\sin x$ 
				& $\displaystyle \mathrm{Shi}(Z)\,\frac{\sin(H)}{H}\,\cos x$ 
				& $-\cos x$ 
				\\ \hline
				$\cos x$ 
				& $\displaystyle -\,\mathrm{Shi}(Z)\,\frac{\sin(H)}{H}\,\sin x$ 
				& $\sin x$
				\\ \hline
				$e^{x}$ 
				& $\displaystyle \frac{\mathrm{Si}(Z)}{2H}\big(e^{x+H} - e^{x-H}\big)$ 
				& does not exist 
				\\ \hline
				$e^{-x^2}$ 
				& $\displaystyle \frac{1}{2H}\!\int_{-H}^H\!-\frac{\sqrt{\pi}}{4}\big[\mathrm{erf}(x+\eta-iZ)+\mathrm{erf}(x+\eta+iZ)-2\mathrm{erf}(x+\eta)\big]\,d\eta$ 
				& $\imag\pi\,\mathrm{erf}(\imag x)\,e^{-x^2}$ 
				\\ \hline
				$e^{i x}$ 
				& $\displaystyle i \frac{\sin(H)}{H}\,\mathrm{Shi}(Z)\,e^{i x}$ 
				& $-i\,e^{i x}$ 
				\\ \hline
		\end{tabu}}
	\end{center}
\end{table}
\begin{figure}
	\begin{center}
		\includegraphics[width=.8\textwidth]{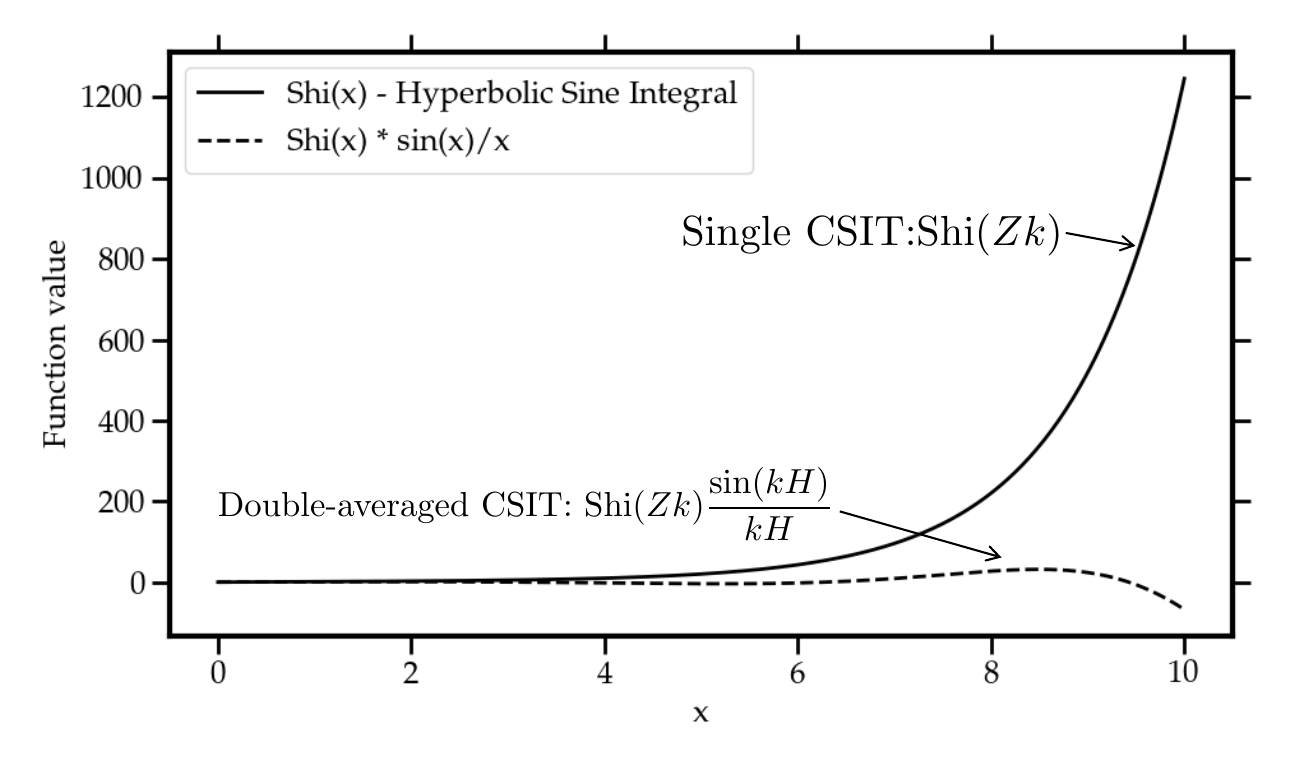}
		\caption{Fourier amplitude response of the CSIT operator. Single CSIT $(H=0)$ uses only the $\tau$ integral, approximated by $\mathrm{Shi}(Z k)$. Double-averaged CSIT includes $\eta \in [-H,H]$ averaging, introducing the $\sin(kH)/(kH)$ modulation, which damps high-wavenumber modes.}
		\label{fig:CSIT_Fourier}
	\end{center}
\end{figure}

\subsection{Practical Computation of $\Im[f(x + \eta + i\tau)]$}

For most practical applications we will not have access to the analytical function of the data $f(x)$. In these cases, the double-integral CSIT can be efficiently computed using the fast Fourier transform (FFT) as follows:
\begin{enumerate}
	\item Compute the discrete (discrete) FFT of the real-valued data \( f_j \):
	\[
	f_j \;\longrightarrow\; \hat{f}_k = \mathrm{FFT}(f_j) .
	\]
	\item Multiply each Fourier coefficient by the complex shift factor:
	\[
	\hat{f}_k \;\longrightarrow\; \hat{f}_k \, e^{i k \eta} e^{- k \tau},
	\]
	corresponding to a shift by \(\eta\) in the real direction and analytic continuation \(i\tau\) in the imaginary direction.
	\item Apply the inverse FFT to recover the shifted function on the real spatial grid:
	\begin{align}
			f(x + \eta + i\tau) = \mathrm{FFT}^{-1}[\hat{f}_k] . 
			\label{eq.Im_FFT}
	\end{align}
	\item Finally extract the imaginary part:
	\[
	\Im[f(x + \eta + i\tau)] .
	\]
\end{enumerate}
To compute the CSIT integral one can apply a suitable numerical quadrature (e.g., trapezoidal or composite Simpson rule) over both variables. Fine sampling near $\tau = 0$ ensures accuracy due to the $\tau^{-1}$ weight.

Note that the FFT is used to implement the \emph{analytic continuation} efficiently for all pairs $(\eta,\tau)$. Conceptually, CSIT does not require FFT; direct evaluation is possible but less efficient and accurate for large grids and many quadrature points.

\subsection{Finite Difference vs. Complex-Step with FFT vs. Pseudospectral Derivative}

\subsubsection{Finite Difference (FD) Operations}

When applying finite-difference methods to compute derivative approximations, the parameter \( h \) represents a \emph{spatial shift} along the computational grid. This means that the derivative approximations are computed using function values such as \( f(x+h) \) and \( f(x-h) \), which correspond to different grid points. Therefore, the step size \( h \) must be an integer multiple of the grid spacing \( \Delta x \). Numerically, this means accessing neighboring elements of the discrete array that represents the function $f(x)$. The resulting derivative approximation is therefore a purely local, grid-aligned operation.

\subsubsection{Complex-Step Method with FFT Operations}

In the complex-step formulation using the FFT, the parameters \( h \) and \( v \) do \emph{not} correspond to discrete grid shifts. Instead, the function \( f(x) \) is sampled at discrete grid points, and the FFT provides its spectral (Fourier) representation, i.e., 
\begin{align}
	f(x) = \sum_k c_k \, e^{i k x},
\end{align}
where \(c_k\) are the Fourier coefficients. A complex shift in physical space, \(x \to x + h + i v\), is then implemented by a spectral multiplication:
\begin{align}
	f(x + h + i v) = \sum_k c_k \, e^{i k h} e^{-k v}.
\end{align}
Multiplying by \( e^{i k h} e^{-k v} \) corresponds to an \emph{analytic continuation} of this Fourier series. Consequently, the FFT allows one to evaluate \( f(x) \) at any complex location \( x + h + i v \), not only at grid-aligned points as it is done using FD. In simpler words, the parameters \( h \) and \( v \) can take arbitrarily small continuous values and need not correspond to grid indices. The parameters $h$ and $v$  are free continuous parameters in the FFT complex-step method and they have different roles:
\begin{itemize}
	\item \textbf{Imaginary shift \( v \):}  it controls the complex-step derivative accuracy. It can be chosen arbitrarily small (e.g.\ \( v \approx 10^{-10} \))—the smaller the value, the higher the precision (until numerical round-off errors dominate), since the difference quotient is not affected by subtractive cancellation as in finite differences.
	\item \textbf{Real shift \( h \):} If \( h = 0 \), the evaluation occurs exactly at the original grid points and if \( h \neq 0 \), the evaluation is performed between grid points. The accuracy depends on how well the FFT representation captures the underlying function (i.e., the spectral resolution).
\end{itemize}

\subsubsection{CSIT with FFT vs.\ Pseudospectral Derivative}

The pseudospectral derivative is defined as \citep{igel2017computational}:
\begin{align}
	f'(x) = \mathrm{FFT}^{-1} \!\left( i k \, \mathrm{FFT}[f](k) \right),
	\label{eq.pseudospectral_derivative}
\end{align}
where the derivative is exact for periodic and sufficiently smooth functions. In contrast, the CSIT formulation modifies the spectral factor \(i k\) by the regularized kernel implied by the imaginary shift \(v\):
\begin{align}
	f'_{\text{CSIT}}(x) = \mathrm{FFT}^{-1} \!\left(e^{i k \eta} e^{- k \tau} \, \mathrm{FFT}[f](k) \right).
\end{align}
For small \(h,v\), the CSIT converges to the pseudospectral derivative. However, for finite \(h,v\), high-wavenumber components are damped, leading to a natural smoothing effect.

The key distinction lies in stability and spectral selectivity. The pseudospectral derivative reproduces the exact derivative but amplifies high-frequency numerical noise, especially near discontinuities or at domain boundaries. The CSIT derivative, on the other hand, introduces a controlled exponential damping proportional to \(e^{-k v}\) that regularizes the high-frequency modes while preserving spectral accuracy in the low- and mid-wavenumber range. In this sense, CSIT can be interpreted as a \emph{regularized pseudospectral derivative}: it retains the spectral precision of the Fourier operator while suppressing parasitic oscillations and edge artifacts through analytic continuation.

The smoothing is not imposed externally but arises naturally from the complex-step formulation, making the CSIT derivative particularly robust in non-periodic or heterogeneous settings where the standard pseudospectral approach becomes unstable, as we will next see.

\subsubsection{Illustrative Example: Logistic Function}

To illustrate the differences between the finite-difference, pseudospectral, and CSIT derivatives, we consider the logistic function
\begin{align}
	f(t) = \frac{1}{1 + \exp\!\big[-k (t - t_0)\big]},
	\label{eq.Logistic_function}
\end{align}
where $t$ denotes time, $t_0$ is the midpoint of the curve, and $k$ controls the steepness of the transition. In this example, we choose $t \in [0,1]$~s, $t_0 = 0.5$~s, $k = 100$, and a total number of samples $N = 500$.

The derivative of Eq.~\eqref{eq.Logistic_function} is computed using three methods: the centered finite-difference (FD) scheme, the pseudospectral derivative (Eq.~\eqref{eq.pseudospectral_derivative}), and the CSIT formulation (Eq.~\eqref{eq.Complex_step_transform}). The results are shown in Fig.~\ref{fig:CSIT_Derivatives}.

\begin{figure}
	\centering
	\includegraphics[width=\textwidth]{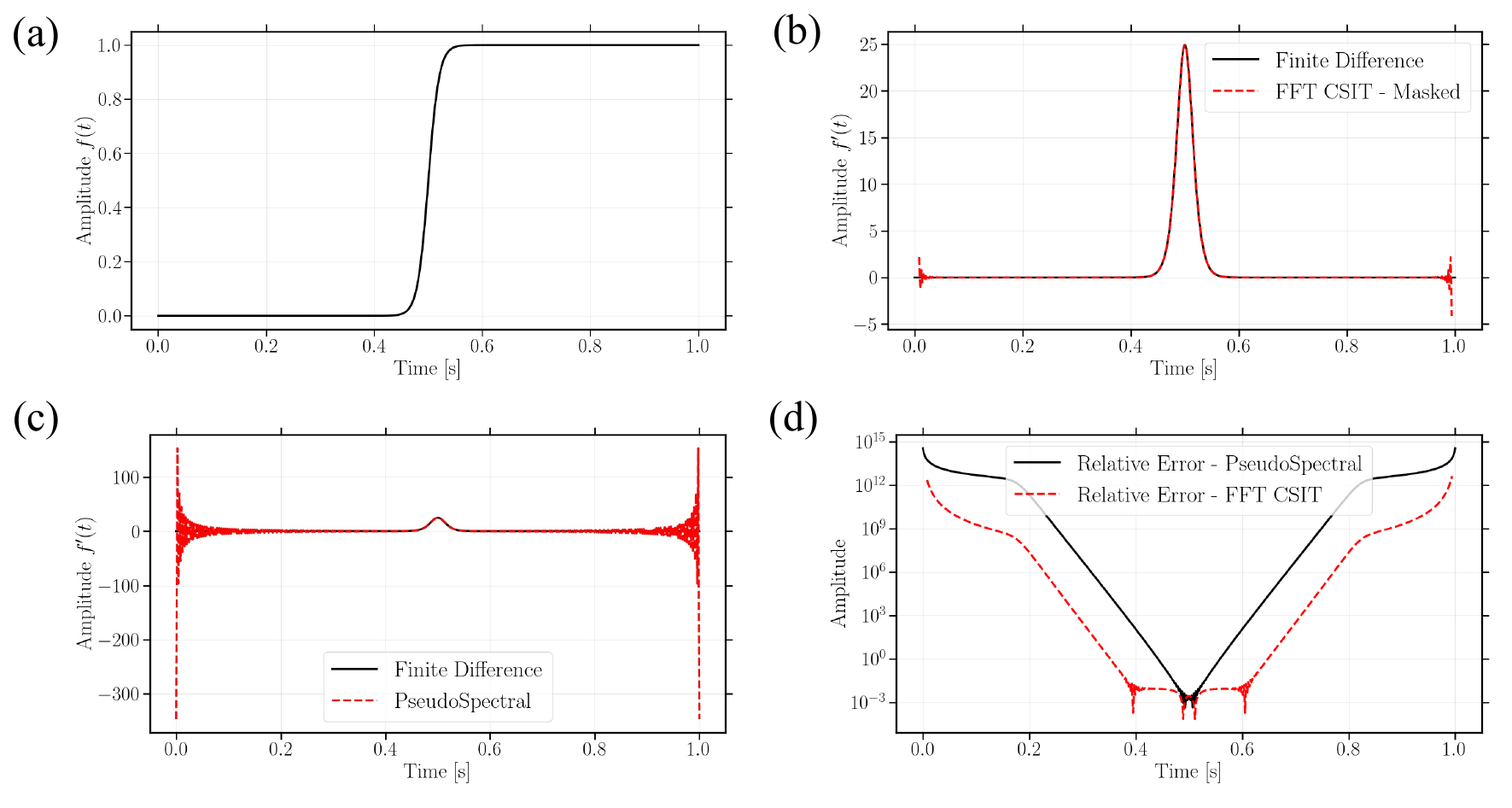}
	\caption{(a) Logistic function (Eq.~\eqref{eq.Logistic_function}) for $t\in[0,1]$~s, $t_0=0.5$~s, $k=100$, and $N=500$. (b) Comparison between the centered finite-difference and CSIT derivatives. (c) Pseudospectral derivative. (d) Relative errors with respect to the analytical derivative.}
	\label{fig:CSIT_Derivatives}
\end{figure}

As expected, the finite-difference derivative exhibits a smooth behavior, free of spurious high-frequency components, but with limited resolution near the steep transition region. In contrast, the pseudospectral derivative (Fig.~\ref{fig:CSIT_Derivatives}c) emphasizes high-frequency modes, resulting in visible oscillations and numerical noise. This behavior arises because the logistic function changes rapidly over a very narrow region. Accurately representing this sharp transition in a Fourier basis requires a large number of high-wavenumber components, which dominate the spectral derivative and amplify any small errors or truncation effects.

The CSIT approach, on the other hand, modifies the Fourier coefficients by a complex shift factor $e^{i k \eta} e^{- k \tau}$, effectively performing a symmetric average (see Eq.~\eqref{eq.Complex_step_transform}). This operation acts as a natural spectral regularization, selectively damping the contribution of high-wavenumber components without the need for explicit filtering. In this example, we set $h = v = \Delta t$, which is sufficient to suppress high-frequency oscillations while preserving the correct derivative amplitude (Fig.~\ref{fig:CSIT_Derivatives}b).

Relative errors are shown in Fig.~\ref{fig:CSIT_Derivatives}d, where we observe that the CSIT derivative is consistently more accurate than the Fourier derivative, except near the midpoint of the function. This behavior can be understood as follows: at $t = t_0$, the logistic function is smooth and locally symmetric, so its derivative is perfectly represented by the available Fourier modes. In this region, the pseudospectral derivative attains its natural spectral accuracy, while the CSIT introduces a small averaging effect due to the complex shifts $h$ and $v$, slightly smoothing the peak derivative. Thus, the CSIT provides improved stability and accuracy in regions of rapid variation, while the pure Fourier derivative remains optimal where the function is locally linear and symmetric.

It is important to note that, as in any FFT-based formulation, the assumption of periodicity introduces potential boundary artifacts. When evaluating the imaginary part of the complex-shifted function, discontinuities between the end points of the interval can cause aliasing and distortions near the boundaries. These artifacts arise because the averaging implied by the CSIT becomes asymmetric close to the edges, where the periodic extension no longer represents the true function behavior. Consequently, we recommend applying the CSIT derivative only to the interior points—typically excluding four to five grid points near each boundary—analogous to how centered finite-difference schemes omit boundary nodes to maintain symmetry and accuracy.

\subsection{CSIT Solution of the Advection Equation}

We next illustrate the application of the introduced CSIT to the one-dimensional advection equation given by the following expression
\begin{align}
	\partial_t u(x,t) + c(x) \partial_x u(x,t) = f(t) \delta(x - x_0), 
	\qquad x \in [0, L], \quad t > 0,
	\label{eq.Advection_equation}
\end{align}
where $u(x,t)$ represents the transported field (e.g., displacement, temperature, or concentration), $c$ denotes the advection speed, $f(t)$ is a prescribed time-dependent source, and $\delta(x - x_0)$ is the Dirac delta distribution introducing a localized excitation at position $x_0$.

In this formulation, the CSIT operator involves the evaluation of the imaginary component $\Im[f(x + \eta + i\tau)]$, which is computed efficiently using the FFT as previously explained. This spectral implementation makes the Pseudospectral method a natural point of comparison for assessing the accuracy and stability of the CSIT-based discretization. As will be shown, the CSIT formulation inherits the spectral accuracy of the FFT while providing a natural regularization mechanism that mitigates the parasitic (checkerboard) modes commonly observed in classical pseudospectral advection schemes.

\subsubsection{Numerical Discretizations}

We compare the CSIT-based solution of the advection equation with two classical numerical schemes: the finite-difference (FD) and the pseudospectral (PS) methods. 
This comparison highlights the consistency, stability, and parasitic-mode behavior of the CSIT derivative relative to well-known discretizations.

\paragraph{Finite-difference scheme}
The leapfrog finite-difference discretization of the advection equation~\eqref{eq.Advection_equation} reads
\begin{align}
	\frac{u_j^{t+\Delta t} - u_j^{t-\Delta t}}{2\Delta t}
	= 
	c \, \frac{u_{j+1}^{t} - u_{j-1}^{t}}{2\Delta x}
	+ f(t)\,\delta(x_j - x_0)
	+ \mathcal{O}(\Delta x^2, \Delta t^2),
	\label{eq.FDM}
\end{align}
where $u_j^t = u(x_j, t)$ and $j$ indexes the spatial grid. This second-order central scheme is well known to generate dispersive ``parasitic'' modes for high wavenumbers.

\paragraph{Pseudospectral scheme}
The pseudospectral discretization computes the spatial derivative in Fourier space, yielding
\begin{align}
	\frac{u_j^{t+\Delta t} - u_j^{t-\Delta t}}{2\Delta t}
	= 
	c \, \mathrm{PS}_x[u^t_j]
	+ f(t)\,\delta(x_j - x_0)
	+ \mathcal{O}(\Delta t^2),
	\label{eq.PSM}
\end{align}
where the pseudospectral derivative operator is defined in eq. \eqref{eq.pseudospectral_derivative}. This method achieves spectral accuracy for periodic functions but may amplify numerical oscillations if the grid or forcing is not smooth.

\paragraph{CSIT-based discretization}
Finally, we use the CSIT as a generalized spatial derivative operator:
\begin{align}
	\frac{u_j^{t+\Delta t} - u_j^{t-\Delta t}}{2\Delta t}
	= 
	c \, \mathrm{CSIT}[u_j^t]
	+ f(t)\,\delta(x_j - x_0)
	+ \mathcal{O}(\Delta x, v^2, \Delta t^2).
	\label{eq.CSIT_Advection}
\end{align}
The CSIT effectively replaces the derivative by a smoothed analytic continuation, mitigating spurious oscillations and non-physical energy transfer between modes.

\subsubsection{Results}

Simulation parameters are listed in Table~\ref{tb.material_parameters}. All schemes use the same time step and spatial grid; only the derivative operator differs. For the CSIT implementation, we set $h = 0.0005\,\Delta x$ and $v = 0.1\,\Delta x$ after convergence testing.
\begin{table}
	\caption{Simulation parameters for the 1D advection equation.}
	\label{tb.material_parameters}
	\begin{center}
		{\renewcommand{\arraystretch}{1.5}
			\begin{tabular}{ | c | c | c | c | c | c | }
				\hline 
				Velocity [m/s] & $L_x$ [m] & $x_s$ [m] & $f_0$ [Hz] & $n_x$ & $c \Delta t / \Delta x$ \\ 
				\hline 
				900 & 10000 & 5000 & 1 & 500 & 0.25 \\ 
				\hline    
		\end{tabular}}
	\end{center}
\end{table}

Figure~\ref{fig:CSIT_Simulations} compares the three solutions. Both the finite-difference and pseudospectral schemes exhibit parasitic modes in the form of secondary oscillations trailing the main pulse, particularly at early times. In contrast, the CSIT-based solution maintains a clean, physically consistent wavefront, demonstrating its ability to suppress non-physical components while preserving the correct phase velocity. 
\begin{figure}
	\begin{center}
		\includegraphics[width=1\textwidth]{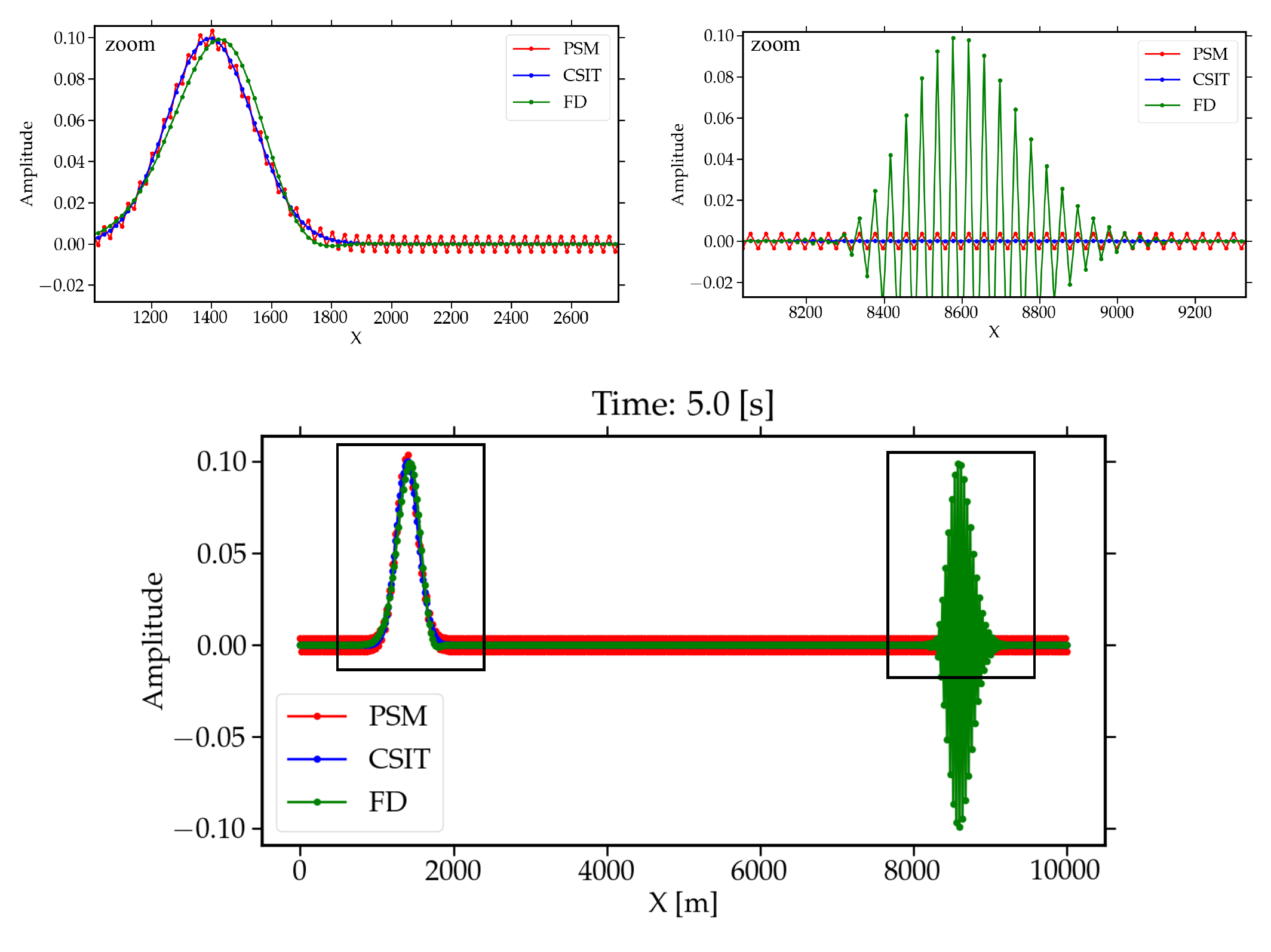}
		\caption{Comparison of advection solutions using finite-difference, pseudospectral, and CSIT discretizations. The CSIT solution eliminates the parasitic mode observed in the other two schemes.}
		\label{fig:CSIT_Simulations}
	\end{center}
\end{figure}

\subsubsection{Dispersion Analysis}

To better understand the observed suppression of parasitic modes, we analyze the numerical dispersion relation associated with each spatial discretization. 
For a harmonic solution of the form $u(x,t) = e^{i(kx - \omega t)}$, substituting into the semi-discrete forms of the advection equation yields the following relations between the numerical frequency $\omega$ and wavenumber $k$.

\paragraph{Fourier and finite-difference schemes}

For the pseudospectral discretization, the numerical derivative in Fourier space is exact, giving $\omega = c k$. While this relation is perfectly non-dispersive in theory, in practice, finite resolution and periodic aliasing produce high-wavenumber artifacts that appear as parasitic oscillations in the numerical solution. In contrast, the centered finite-difference scheme introduces the discrete dispersion relation
\begin{align}
	\omega_{\mathrm{FD}} = \frac{c}{\Delta x}\sin(k \Delta x),
\end{align}
which leads to phase errors and the familiar trailing or leading dispersive waves.

\paragraph{CSIT derivative}

The CSIT operator modifies the derivative symbol through the complex-analytic continuation and symmetric averaging, leading to
\begin{align}
	\omega_{\mathrm{CSIT}} = c \, k \, \mathrm{Shi}(Z k) \frac{\sin(k H)}{k H}.
\end{align}
The multiplicative factors $\mathrm{Shi}(Zk)$ and $\sin(kH)/(kH)$ act as smooth low-pass filters: the first damps high-$k$ components exponentially through the imaginary shift $Z$, while the second attenuates them algebraically through the symmetric averaging over $\eta \in [-H,H]$. 
As a result, the phase velocity $v_\phi = \Re[\omega_{\mathrm{CSIT}}]/k$ remains close to $c$ for low and mid wavenumbers, while the amplitude of spurious short-wavelength components is efficiently reduced.

\paragraph{Single CSIT ($H=0$)}

An important and somewhat unexpected result is that the CSIT removes parasitic modes even in its simplest form, \emph{without any real averaging} ($H=0$). When only the imaginary continuation is used ($Z>0$, $H=0$), the CSIT modifies the wavenumber symbol of the derivative by the factor $\mathrm{Shi}(Zk)$, leading to
\begin{align}
	\omega_{\mathrm{CSIT}} = c\,k\,\mathrm{Shi}(Zk).
\end{align}
The hyperbolic-sine integral $\mathrm{Shi}(Zk)$ behaves as $\mathrm{Shi}(Zk)\approx 1$ for $|kZ|\ll1$ and grows sublinearly for large $|kZ|$, effectively damping the amplitude of high-wavenumber modes. This soft filtering originates from the analytic continuation in the imaginary direction and thus provides a \emph{built-in spectral regularization}, even without real-space averaging.

\paragraph{Physical interpretation}

Unlike artificial viscosity or explicit smoothing filters, the damping introduced by $\mathrm{Shi}(Zk)$ arises naturally from the analytic structure of the complex integral transform. The imaginary displacement $Z$ acts as a continuous filter in the complex plane, attenuating components associated with rapidly oscillating, non-analytic extensions of $f(x)$. This mechanism explains why the CSIT eliminates parasitic oscillations even in the single-shift configuration.

\subsubsection{Pseudocode: 1D Advection with Symmetric CSIT-Based Derivative}

We next show the pseudocode used in the previous simulations for the CSIT solution of the advection equation. 
\begin{algorithm}
	\caption{1D Advection with Symmetric CSIT Derivative}
	\begin{algorithmic}[1]
		\State \textbf{Input:} Grid $x_j$, initial condition $u(x,0)$, speed $c$, time step $\Delta t$, max imaginary shift $Z$, max real shift $H$, number of $\tau$ points $M$, number of $\eta$ points $P$, number of time steps $N_t$
		\State \textbf{Precompute:} FFT wavenumbers $k_j = 2\pi j / L$ (for uniform grid)
		\State \textbf{Discretize:} $\tau_m = m \Delta \tau$, $m=1,\dots,M$, $\Delta \tau = Z / M$
		\State \textbf{Discretize:} symmetric $\eta_p = -H + (p-0.5) \Delta \eta$, $p=1,\dots,P$, $\Delta \eta = 2H / P$
		\For{each time step $n=1$ to $N_t$}
		\State Initialize $u_x \gets 0$ 
		\For{each $\eta_p$}
		\For{each $\tau_m$}
		\State Compute FFT: $\hat u = \text{FFT}(u)$
		\State Analytic continuation with real shift $\eta_p$ and imaginary shift $\tau_m$: 
		\[
		\hat u_{\eta_p, \tau_m} = \hat u \cdot \exp(i k \eta_p - k \tau_m)
		\]
		\State Inverse FFT: $u_{\rm complex} = \text{IFFT}(\hat u_{\eta_p, \tau_m})$
		\State Take imaginary part: $u_{\rm im} = \operatorname{Im}(u_{\rm complex})$
		\State Increment derivative integral: $u_x \gets u_x + w_m w_p \cdot (u_{\rm im}/\tau_m)$ \Comment{Trapezoidal weights $w_m$, $w_p$}
		\EndFor
		\EndFor
		\State Normalize by $2ZH$: $u_x \gets \frac{1}{2 Z H} u_x$
		\State Update $u$ using chosen time-stepping scheme, e.g., explicit Euler:
		\[
		u^{n+1} = u^n - c \Delta t \, u_x
		\]
		\EndFor
		\State \textbf{Output:} $u(x, t_{\rm final})$
	\end{algorithmic}
\end{algorithm}

\subsection{Regularized Calculation of the Instantaneous Frequency}

The complex trace $v(t)$ is defined as \citep{taner1979complex}
\begin{align}
	v(t) = x(t) + i y(t) = x(t) + i \mathscr{H} x(t) =  A(t) e^{i\theta(t)} ,
	\label{eq.analytic_signal}
\end{align}
where $x(t)$ is the time series analyzed, $\mathscr{H}$ is the Hilbert transform, $A$ the amplitude and $\theta$ the (instantaneous) phase, that can be obtained from \citep{bracewell1986fourier}
\begin{align}
	\theta(t) = \tan^{-1} \left(\frac{y(t)}{x(t)}\right) .
\end{align} 
The instantaneous frequency $f^{\text{inst}}(t)$ is defined as the rate of change of the instantaneous phase as follows \citep{barnes1991instantaneous}
\begin{align}
	f^{\text{inst}}(t) = \frac{1}{2\pi} \partial_t \theta(t) ,
\end{align}
which results in
\begin{align}
	f^{\text{inst}}(t) = \frac{1}{2\pi} \frac{x(t)\partial_t y(t)-y(t)\partial_tx(t)}{x^2(t)+y^2(t)} .
	\label{eq.Instantaneous_Frequency}
\end{align}
However, when \( x(t),y(t) \to 0 \), the denominator in eq.~\eqref{eq.Instantaneous_Frequency} tends to zero, leading to indeterminate values and large numerical spikes that are physically meaningless \citep[e.g.,][]{matheney1995seismic}.  This effect can be avoided by artificially adding a small damping factor as follows
\begin{align}
	f^{\text{inst}}(t) = \frac{1}{2\pi} \frac{x(t)\partial_t y(t)-y(t)\partial_tx(t)}{x^2(t)+y^2(t)+\epsilon^2}
	\label{eq.Damped_Instantaneous_Frequency}
\end{align}
where \( \epsilon \) acts as a regularization parameter \citep{matheney1995seismic}. The damped instantaneous frequency in eq.~\eqref{eq.Damped_Instantaneous_Frequency} has been widely used to estimate seismic attenuation in the Earth \citep[e.g.,][]{matheney1995seismic,Ford2012,durand2013insights}. Several alternative regularization strategies have been proposed to mitigate the singularity in eq.~\eqref{eq.Instantaneous_Frequency} when \( x,y \to 0 \) \citep[e.g.,][]{fomel2007local}. In the following, we show that the CSIT provides a natural and stable alternative, avoiding any artificially imposed regularization.

\subsubsection{The Instantaneous Frequency using the CSIT: a Natural Regularization}

We next show how the application of the CSIT avoids any indetermination and any artificially imposed regularization in the calculation of the instantaneous frequency. We can compute the instantaneous frequency $f^{\text{inst}}$ using the CSIT as follows
\begin{align}
\begin{aligned}
		f^{\text{inst}}(t) & = \frac{1}{2\pi} \partial_t \theta(t) = \frac{1}{2\pi} \partial_t \left[\arctan \left(\frac{y(t)}{x(t)}\right)\right] \\
		& \approx \frac{1}{2\pi}\lim_{\varepsilon\to0^+}\frac{1}{2HZ}\int_{-H}^H\int_{\varepsilon}^Z  \frac{\Im \left[\arctan\left(\frac{y+\eta+i\tau}{x+\eta+i\tau}\right)\right]}{\tau} 
	\,d\tau\,d\eta 
\end{aligned}
	\label{eq.CSIT_Instantaneous_Frequency}
\end{align}
The integrand in eq. \eqref{eq.CSIT_Instantaneous_Frequency} are can be written as follows
\begin{align}
	\begin{aligned}
		\frac{\Im \left[\arctan\left(\frac{B}{A}\right)\right]}{v}  &=  \frac{1}{2v} \ln\left( \frac{A-iB}{A+iB} \right),
	\end{aligned}
	\label{eq.CS_Instantaneous_Frequency}
\end{align}
where 
\begin{align}
	A(t)= x(t)+h+iv, \quad B(t)=y(t)+h+iv
\end{align}
with $h,v \in \mathbb{R}^+$ and $h,v\to 0$. The instantaneous frequency thus becomes
\begin{align}
	\begin{aligned}
		f^{\text{inst}}(t) \approx \frac{1}{2\pi}\lim_{\varepsilon\to0^+}\frac{1}{4HZ}\int_{-H}^H\int_{\varepsilon}^Z  \frac{\ln\left[\frac{(x+\eta+i\tau)-i(y+\eta+i\tau)}{(x+\eta+i\tau)+i(y+\eta+i\tau)} \right] }{\tau} 
		\,d\tau\,d\eta .
	\end{aligned}
	\label{eq.CSIT_Instantaneous_Frequency_Analitical}
\end{align}

\subsubsection{Understanding the Complex-Step Parameters}

Note that we make an imaginary perturbation of both $x(t)$ and $y(t)$, i.e., $x(t)\to x(t)+h+iv$, in the Fourier domain. This means that we multiply the Fourier representation of the signal $ X(\omega)$ by the spectral multiplier $e^{i \omega h - \omega v}$ as follows
\begin{align}
	X(\omega) \to X(\omega) e^{i \omega h - \omega v} .
\end{align}

\paragraph{The Real Shift $h$}

The term $e^{i \omega h}$ corresponds to a \textit{real time shift} of the signal by $h$ seconds
\begin{align}
	x(t+h) \longleftrightarrow X(\omega) e^{i \omega h}.
\end{align}

This real shift $h$ shifts the phase slightly and helps avoid purely numerical derivatives at exact sample points. In general terms, $h$ slightly shifts the derivative point in time for numerical stability.

\paragraph{The Imaginary Shift $v$}
The term $e^{-\omega v}$ represents an \textit{imaginary-time shift}, which is the key to the \emph{complex-step derivative}:
\begin{align}
	x(t + i v) \longleftrightarrow X(\omega) e^{- \omega v}.
\end{align}

It acts as a regularization when the signal amplitude approaches zero and prevents division by zero in eq. \eqref{eq.CS_Instantaneous_Frequency}. It also smooths spikes in the instantaneous frequency at amplitude zeros or phase flips. In general terms, $v$ injects a tiny damping along the imaginary axis to avoid singularities in $B/A$ when $A \sim 0$.

\subsubsection{No Need to Compute the Hilbert Transform}

The Hilbert Transform in the Fourier Domain can be written as follows
\begin{align}
\mathcal{F}\{H[x(t)]\}(\omega) = -i \, \operatorname{sgn}(\omega) \mathcal{F}\{x(t)\}= -i \, \operatorname{sgn}(\omega) \, X(\omega),
\end{align}
where $\operatorname{sgn}$ is the sign function. Therefore, the analytic signal (eq. \eqref{eq.analytic_signal}) in the Fourier Domain can be written as follows
\begin{align}
Z(\omega) = X(\omega) + i \, \mathcal{F}\{H[x(t)]\}(\omega) 
= X(\omega) + i (-i \, \operatorname{sgn}(\omega) X(\omega)) 
= X(\omega) + \operatorname{sgn}(\omega) X(\omega) .
\end{align}

As a consequence there is no need to compute the Hilbert transform to compute the instantaneous frequency using the CSIT.

\subsubsection{An Analytical Test}

We now examine how the finite-difference (FD; eq.~\eqref{eq.Instantaneous_Frequency}) and CSIT (eq.~\eqref{eq.CSIT_Instantaneous_Frequency_Analitical}) approximations to the instantaneous frequency behave as a function of the sampling density of the signal. As a test case, we consider an analytical chirp defined as
\begin{align}
	f(t) = \RealPart \left[\exp\left(2\pi i \left(f_0 t + 0.5\, k_c\, t^2 \right)\right)\right], 
	\qquad t \in [0,1],
	\label{eq.chirp_signal}
\end{align}
where the initial frequency is $f_0 = 20$~Hz and the chirp rate is $k_c = 20$. The corresponding analytical instantaneous frequency is
\begin{align}
	f_{\text{true}}^{\text{inst}}(t) = f_0 + k_c t .
	\label{eq.True_IF}
\end{align}

To analyze the effect of temporal sampling, we discretize the interval $[0,1]$ using two different sampling densities, $n_t = 2500$ and $n_t = 300$. Fig.~\ref{fig:IF_analytical} shows the reconstructed analytic signal for both cases together with the instantaneous frequency estimated using the FD and CSIT formulations.
\begin{figure}
	\begin{center}
		\includegraphics[width=1\textwidth]{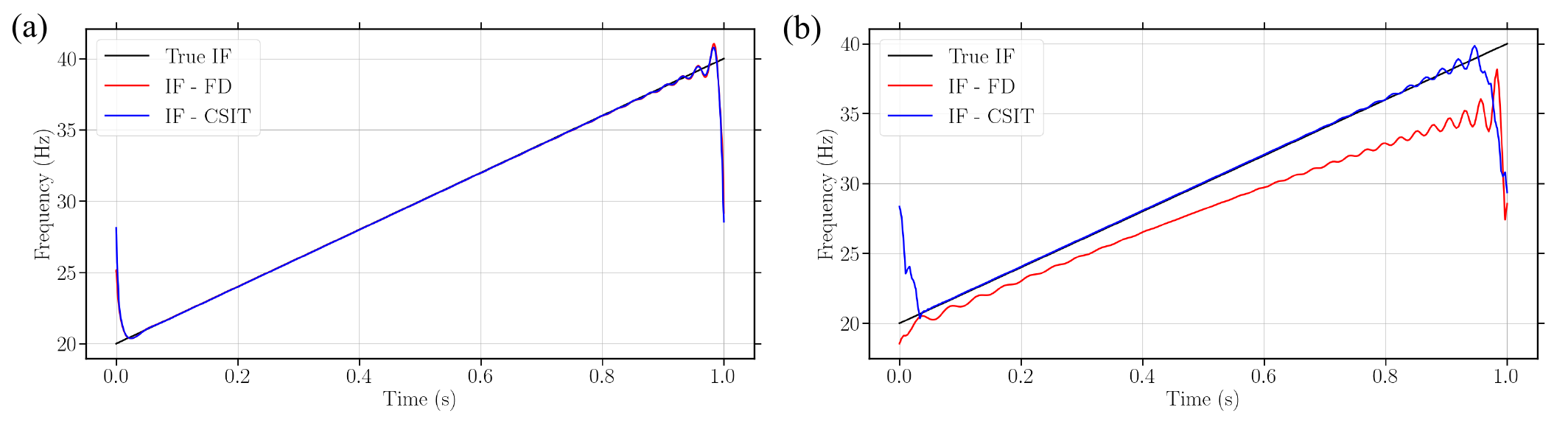}
		\caption{Reconstruction of the instantaneous frequency for the chirp signal defined in eq.~\eqref{eq.True_IF} using two time-sampling rates $n_t$: (a) $n_t = 2500$ and (b) $n_t = 300$.}
		\label{fig:IF_analytical}
	\end{center}
\end{figure}

For a sufficiently large number of samples ($n_t \rightarrow \infty$), both methods converge to the analytical instantaneous frequency. The small errors near the boundaries are identical for both approaches and arise from the Fourier transform used to construct the analytic signal in the case of the CSIT and for computing the Hilbert transform in the case of FD. Although such edge effects can be mitigated through smoothing or tapering, we intentionally avoid any pre-processing in order to isolate the effects of sampling density rather than Fourier-domain aliasing. When the sampling density is substantially reduced ($n_t = 300$), the CSIT approximation (eq.~\eqref{eq.CSIT_Instantaneous_Frequency_Analitical}) still achieves higher accuracy compared to the classical FD method (eq.~\eqref{eq.Instantaneous_Frequency}). 

This highlights two important characteristics of the CSIT formulation when applied to instantaneous frequency estimation:
\begin{enumerate}
\item The CSIT maintains high accuracy even under coarse sampling, in contrast to the finite-difference approach whose sensitivity to grid spacing leads to pronounced errors as the sampling rate decreases. This robustness arises because CSIT performs differentiation in the Fourier domain, where derivative operators are represented exactly and are less susceptible to truncation and subtraction errors. 
\item Both methods share similar boundary artifacts, which are inherent to the construction of the analytic signal via the Fourier transform rather than to the derivative operator itself. The superior performance of CSIT in the interior of the domain suggests that it can be particularly advantageous for seismological applications involving sparse observations or signals with rapidly varying frequency content, where traditional finite-difference estimates become unreliable.
\end{enumerate}

The analytical results above demonstrate that the CSIT formulation remains accurate and stable even under coarse sampling, and that it systematically suppresses spurious oscillations that arise in finite-difference estimates of the instantaneous frequency. While these controlled experiments isolate the numerical behavior of the method, real seismological signals introduce additional challenges, including noise contamination, amplitude modulations, complex wave interference, and the practical limitations of instrument sampling rates. To evaluate the performance of the CSIT under such realistic conditions, we next apply the method to broadband seismic data from the 2010 deep-focus Granada earthquake. This example provides a test case: the signal contains sharp amplitude variations and multiple overlapping phases, conditions under which finite-difference–based instantaneous frequency estimates often become unstable. The following section examines how the CSIT behaves in this setting and assesses its advantages over conventional approaches when applied to real Earth data.

\subsubsection{Applications to Seismological Data}

On April 11, 2010, a significant seismic event with a moment magnitude of 6.3 Mw occurred in the Granada region of Spain. The earthquake, originating at a depth of approximately 650 kilometers, is considered to be a rare deep-focus event. Its considerable depth suggests that it was an isolated occurrence, distinct from the shallow crustal seismicity that typically characterizes the region \citep{buforn20112010}. Deep-focus earthquakes of this kind provide unique opportunities to investigate mantle dynamics and subduction-related processes at great depths, offering valuable constraints for models of the Earth's interior \citep[e.g.][]{buforn20112010, bezada2012contrasting, sun2024revealing}.

We downloaded seismological data using ObsPy \citep{Krischer2015} from the seismological station ASYE (latitude $11.56^{\circ}$, longitude $41.44^{\circ}$). Fig.~\ref{fig:Granada_event}a shows the location of the Granada earthquake (red star) and the selected station (blue triangle). For the instantaneous frequency analysis, we focus on the vertical component of the normalized ground velocity (Fig.~\ref{fig:Granada_event}b). Standard data pre-processing was performed using Obspy \citep{Krischer2015} and included band-pass filtering between 1--10s.
\begin{figure}
	\begin{center}
		\includegraphics[width=1\textwidth]{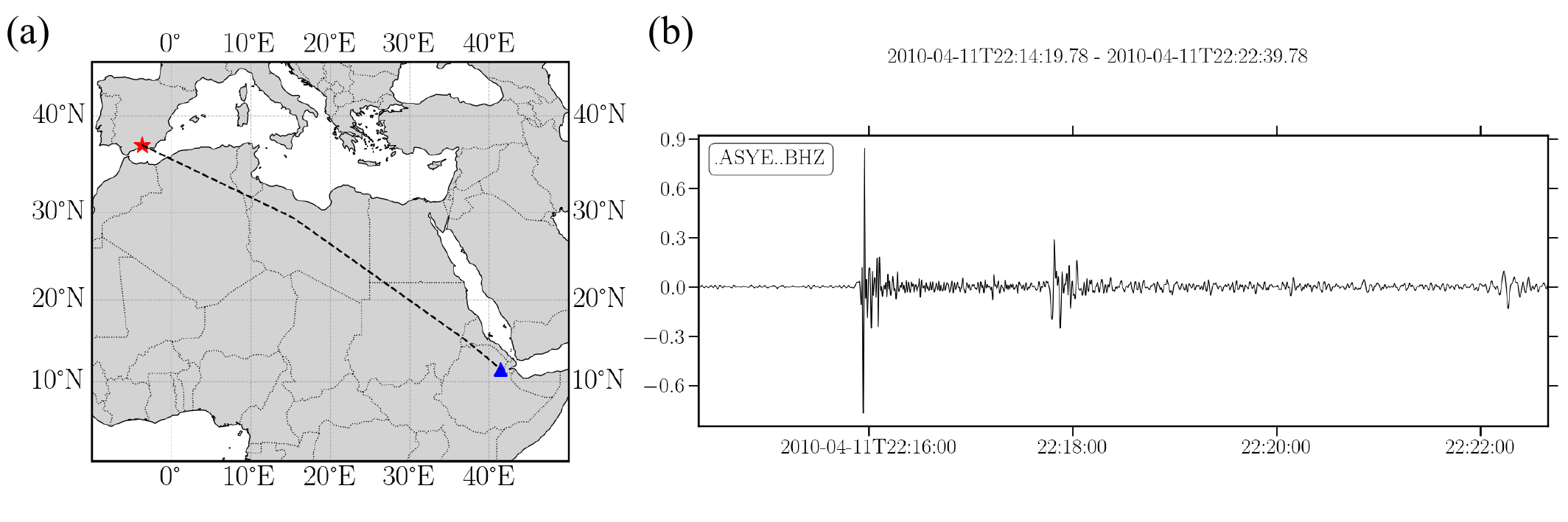}
		\caption{(a) Location of the April 2010 Granada earthquake epicenter (red star) and seismological station used in this study (blue triangle). (b) Normalized vertical component velocity recording of the event.}
		\label{fig:Granada_event}
	\end{center}
\end{figure}

For comparison, we compute the instantaneous frequency using conventional finite-difference techniques (eq. \eqref{eq.Instantaneous_Frequency}) and using the CSIT (eq. \eqref{eq.CSIT_Instantaneous_Frequency_Analitical}). For the calculation of the CSIT, we selected the parameters $H=Z=\Delta t$, $\epsilon=10^{-2}\Delta t$ with $N_{\eta}=4$, $N_{\tau}=4$. It is important not to choose $\epsilon$ too small, as excessively small values amplify high-wavenumber noise inherent to the analytic continuation and lead to spurious oscillations.

Results are presented in Fig.~\ref{fig:CSIT_DATA}. At small amplitudes (Figs.~\ref{fig:CSIT_DATA}a,b), both methods produce nearly identical instantaneous frequencies. However, the very large spikes produced by the finite-difference approximation---particularly during rapid amplitude variations---are absent in the CSIT estimate. The separated panels (Figs.~\ref{fig:CSIT_DATA}c,d) illustrate this distinction clearly: the finite-difference method exhibits high-amplitude behavior, whereas the CSIT result remains stable and physically interpretable. This behavior arises because the CSIT naturally regularizes the derivative through its complex shift, suppressing artifacts associated with large amplitude gradients and reducing sensitivity to local noise.
\begin{figure}
	\centering
	\includegraphics[width=\textwidth]{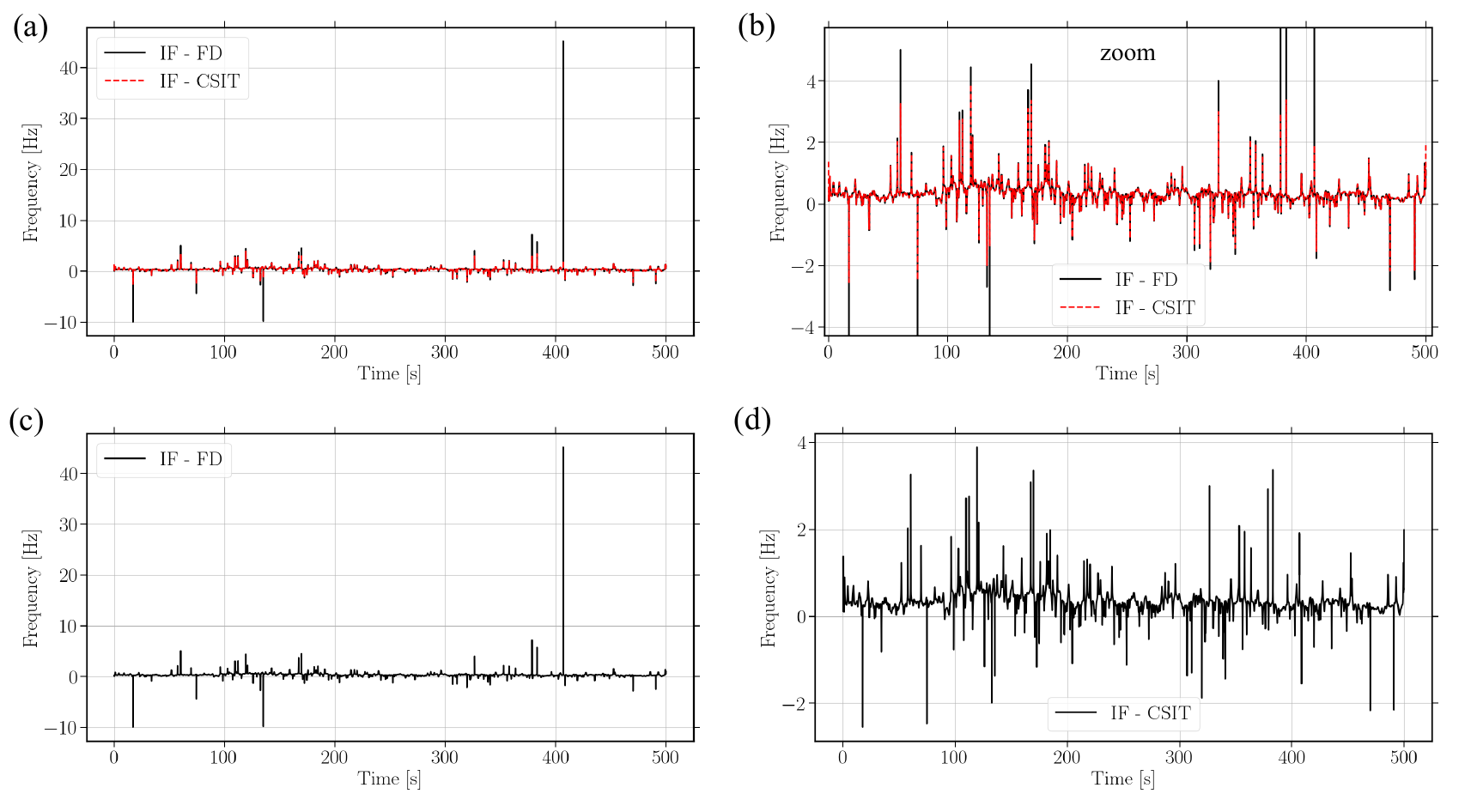}
	\caption{(a) Instantaneous frequency of the Granada 2010 event computed using finite-difference techniques (eq.~\ref{eq.Instantaneous_Frequency}) and the CSIT (eq.~\ref{eq.CSIT_Instantaneous_Frequency_Analitical}). (b) Zoom of panel (a). (c) Instantaneous frequency obtained using the finite-difference method. (d) Instantaneous frequency obtained using the CSIT.}
	\label{fig:CSIT_DATA}
\end{figure}

\section{Discussion}

\subsection{On the Origins of the Complex-Step Method: The Stieltjes Inversion Formula}

The Complex-Step method, originally introduced by \citet{Squire1998} and later generalized by \citet{Abreu201384}, has its theoretical foundations/connections in the Stieltjes inversion formula (circa 1880--1890). The Stieltjes inversion formula provides a means of recovering a measure (or distribution function) from its Stieltjes transform.

\begin{definition}[Stieltjes Transform]
	Let $\mu$ be a positive, finite measure on the real line. The \emph{Stieltjes transform} of $\mu$ is defined as \citep{anderson2010introduction,HirschmanWidder1955,WidderLaplaceTransform}
	\begin{align}
		S_{\mu}(z) =  \int_{-\infty}^{\infty} \frac{d\mu(t)}{t - z}, \quad z \in \mathbb{C} \setminus \mathbb{R}.
	\end{align}
\end{definition}

For $z \in \mathbb{C} \setminus \mathbb{R}$, both the real and imaginary parts of $(t-z)^{-1}$ are continuous functions of $t \in \mathbb{R}$, and are absolutely integrable with respect to any finite measure~$\mu$. The Stieltjes inversion formula then allows one to recover the original measure~$\mu$ from its transform~$S_\mu(z)$.

\begin{definition}[Stieltjes Inverse Transform]
	For any open interval $I\in[a,b]$ whose endpoints are not atoms of $\mu$ (that is, points carrying a positive, nonzero amount of measure) \citep{anderson2010introduction,HirschmanWidder1955,WidderLaplaceTransform}, the measure of the interval is recovered as
	\begin{align}
		\begin{aligned}
			\mu((a,b]) 
			&= \lim_{\epsilon \to 0^+} \frac{1}{\pi} \int_I \frac{S_\mu(x + i\epsilon) - S_\mu(x - i\epsilon)}{2i} \, dx \\
			&= \lim_{\epsilon \to 0^+} \frac{1}{\pi}  \int_I \Im \, S_\mu(x + i\epsilon) \, dx .
			\label{eq.Stieltjes_Inverse_Formula}
		\end{aligned}
	\end{align}
\end{definition}

The Stieltjes inversion formula thus provides a direct way to reconstruct the measure~$\mu$ from its transform~$S_{\mu}(z)$. Stieltjes' work was part of the broader development of functional analysis and measure theory that we will not discuss here. We only note that the Stieltjes transform and its inverse remain central tools in several fields of science, including spectral theory, quantum mechanics, random matrix theory, and the theory of orthogonal polynomials.

From a modern numerical perspective, the conceptual basis of the Complex-Step method introduced by \citet{Squire1998} can be viewed as closely related to the analytic continuation underlying the Stieltjes inversion formula in eq.~\eqref{eq.Stieltjes_Inverse_Formula}. In the same spirit, the Complex-Step Integration Technique (CSIT) introduced in this work can be regarded as a generalization of eq.~\eqref{eq.Stieltjes_Inverse_Formula}, grounded in the complex-step formulation developed by \citet{Abreu201384}.

\subsection{Frequency-Domain Interpretation of the CSIT}

To understand the effect of the CSIT operator in spectral space, consider the Fourier representation of a periodic or analytically continuable function,
\[
f(x) = \sum_{k \in \mathbb{R}} \hat{f}(k) e^{i k x}.
\]
Substituting into the definition of the CSIT gives
\[
(\mathcal{C}_{H,Z} f)(x)
= \sum_k \hat{f}(k)\, \sigma_{H,Z}(k)\, e^{i k x},
\]
where the \textbf{Fourier multiplier (symbol)} of the CSIT operator is
\begin{equation}
	\sigma_{H,Z}(k)
	= i k \, \mathrm{Shi}(Zk)\, \frac{\sin(kH)}{kH}.
	\label{eq.csit_symbol}
\end{equation}

This expression reveals several key insights:

\begin{enumerate}
	\item \textbf{Derivative-like behavior.}  
	For small $kH$ and $kZ$, the expansions
	\[
	\mathrm{Shi}(Zk) = Zk + \tfrac{1}{18}(Zk)^3 + \mathcal{O}((Zk)^5), 
	\qquad 
	\frac{\sin(kH)}{kH} = 1 - \tfrac{1}{6}(kH)^2 + \mathcal{O}((kH)^4),
	\]
	show that
	\[
	\sigma_{H,Z}(k) = i k \left[ 1 + \mathcal{O}(Z^2k^2 + H^2k^2) \right],
	\]
	confirming that the CSIT approximates the derivative operator $d/dx$ to second order in both $H$ and $Z$.
	
	\item \textbf{Amplitude modulation.}  
	The factor $\frac{\sin(kH)}{kH}$ introduces a smooth low-pass filtering effect, attenuating high-wavenumber components ($|k|H \gg 1$).  
	This explains the CSIT’s ability to suppress parasitic oscillations or grid-scale noise in discrete PDE simulations.
	
	\item \textbf{Vertical complex-step damping.}  
	The $\mathrm{Shi}(Zk)$ term grows monotonically for real $Zk>0$, reflecting the exponential continuation of $f$ into the complex plane.  
	For small $Z$, it acts as a controlled amplification of the derivative magnitude without phase distortion.
	
	\item \textbf{Phase preservation.}  
	Since $\sigma_{H,Z}(k)$ is purely imaginary for real $k$, the CSIT preserves phase relationships and introduces no dispersive phase error—only amplitude modulation.
\end{enumerate}

Thus, the CSIT may be viewed as a \textit{spectrally filtered derivative operator}:
\[
\mathcal{C}_{H,Z} 
= 
\mathcal{F}^{-1} \!\left[\, i k\, \mathrm{Shi}(Zk)\, \frac{\sin(kH)}{kH} \,\mathcal{F}[\cdot] \right],
\]
where $Z$ controls the vertical complex-step smoothing and $H$ controls the horizontal averaging.

\subsection{Choosing $H$ and $Z$ relative to the grid spacing $\Delta x$}

When implementing the CSIT on discrete data with grid spacing $\Delta x$, the truncation parameters $H$ (real-direction averaging) and $Z$ (imaginary-step height) should be selected to balance accuracy, stability, and smoothing. Both parameters parameters $H$ and $Z$, act as scale parameters controlling the operator's derivative approximation and spectral filtering strength.

\begin{enumerate}
	\item \textbf{Consistency order:}  
	The symmetric form of the CSIT satisfies
\begin{align}
	(\mathcal{C}_{H,Z} f)(x) = f'(x) + \mathcal{O}(H^2) + \mathcal{O}(Z^2),
\end{align}
 provided $f^{(3)}$ is bounded in the complex neighborhood $\Omega_{H,Z}$. Thus, choosing $H \sim Z \sim \mathcal{O}(\Delta x)$ yields an overall truncation error  of second-order $\error(\Delta x^2)$.
	
	\item \textbf{Smoothing and scale separation:}  
	The $\eta$-averaging introduces a low-pass filter $\mathrm{sinc}(kH)$, while the vertical complex-step produces the $\mathrm{Shi}(Zk)$ modulation. To preserve physical gradients while suppressing grid-scale oscillations, $H$ and $Z$ should be of smaller order as the local grid spacing. Excessively large $H$ or $Z$ overly damps high frequencies.
	
	\item \textbf{Numerical stability and precision:}  
	Unlike conventional finite differences, the complex-step formulation avoids cancellation errors for small $\tau$ \citep{Squire1998}. The integrand remains regular as $\tau \to 0$, so smaller $Z$ can be used safely down to machine precision. 
	
	\item \textbf{Bias control via symmetry:}  
	The asymmetric form, i.e., when $\eta\in[0,H]$, introduces a linear bias $\tfrac{H}{2} f''(x)$.	Using centered averaging over $\eta \in [-H,H]$ eliminates this term, leaving only second-order truncation errors. In numerical practice, this symmetric form is strongly recommended.
	
	\item \textbf{Quadrature and discretization:}  
	The double integral in $\eta$ and $\tau$ can be approximated by any desired quadrature. Since the integrand is smooth near $\tau=0$, no special treatment of singularities is required. Finer resolution in $\tau$ near zero can improve accuracy at negligible cost.
\end{enumerate}

\begin{remark}
	In time-dependent PDE solvers, the normalization factor $1/(2HZ)$ can be absorbed into a time-step coefficient or scaling constant. The theoretical scaling ensures that $\mathcal{C}_{H,Z} f \to f'$ as $H,Z \to 0$, while practical implementations may tune this value to balance derivative magnitude and damping.
\end{remark}

\subsection{Calculation of the CSIT using FFT vs. Interpolation Techniques}

In the present work, the CSIT has been computed using the FFT, which provides a natural and efficient way to evaluate analytic continuations of periodic or quasi-periodic fields. However, this is not a fundamental requirement: the same transform can be evaluated using real-space interpolation techniques. While the FFT-based approach achieves spectral accuracy for smooth domains, interpolation-based implementations may extend the method to heterogeneous or non-periodic settings. Their accuracy, however, depends on the polynomial order of the interpolation scheme and on the chosen bounds for $H$ and $Z$.

Table~\ref{tb.Fourier_Interpolated_CSIT} summarizes the main differences between both approaches in terms of accuracy, computational cost, and grid requirements.  The FFT-based CSIT acts as a \emph{global} interpolation operator, implicitly coupling all spatial modes. In contrast, polynomial or spline-based implementations would act \emph{locally}, potentially requiring the definition of a \textit{mesh of elements}, similar to finite-element or finite-volume frameworks. This local formulation increases implementation complexity, as it requires a consistent spatial topology and quadrature design, but it also opens the possibility of performing highly realistic and spatially heterogeneous simulations of partial differential equations (PDEs). 
A detailed exploration of this local, interpolation-based CSIT formulation will be the subject of future work.
\begin{table}[h!]
	\footnotesize
	\caption{Comparison of FFT-based and interpolation-based implementations of the CSIT.}
	\label{tb.Fourier_Interpolated_CSIT}
	\begin{tabularx}{\linewidth}{lXX}
		\toprule
		\textbf{Aspect} & \textbf{FFT-based CSIT} & \textbf{Interpolation-based CSIT} \\
		\midrule
		Accuracy & Spectrally accurate for smooth periodic functions; integrates naturally over $\eta$ and $\tau$ & Depends on interpolation order; accuracy may degrade for large $H$ or $Z$ \\
		Speed & Efficient via FFT/IFFT; double loop over $\eta$ and $\tau$ increases cost moderately & Slower; scales with grid size and number of interpolation and quadrature points \\
		Grid requirement & Uniform and periodic for standard FFT; adaptable with non-uniform FFT variants & Naturally supports non-uniform and non-periodic grids but requires careful interpolation \\
		Implementation & Compact and global; straightforward in spectral space & More complex; requires local topology definition and interpolation kernels \\
		\bottomrule
	\end{tabularx}
\end{table}

In addition, note that the spectral implementation of the CSIT inherits the exponential convergence of the Fourier representation for analytic functions, making it highly efficient for smooth, periodic problems. By contrast, interpolation-based CSIT will exhibit algebraic convergence, typically of order $\mathcal{O}(\Delta x^{p})$, where $p$ is the degree of the interpolation polynomial. Hence, for sufficiently smooth data, the FFT version is superior in accuracy per degree of freedom, whereas for strongly heterogeneous or non-smooth domains, local interpolation may provide more geometric flexibility at the cost of reduced convergence rate.

\subsection{CSIT vs Fourier Derivative in Advection Problems}

We have shown that the advection equation eq. \eqref{eq.Advection_equation} can be efficiently solved using the introduced CSIT, leading to a stable evolution that efficiently suppresses parasitic oscillations. In contrast, the classical pseudospectral scheme based on the Fourier derivative, while spectrally accurate for smooth periodic functions, is known to suffer from numerical ringing and spurious checkerboard modes when applied to under-resolved or non-periodic fields.

Table~\ref{tb:derivative_comparison_clean} summarizes the main differences between the Fourier derivative and various CSIT formulations. The single CSIT ($H=0$) already introduces mild regularization through the imaginary shift $i\tau$, acting as a built-in complex-step smoothing operator. Averaging over a finite real shift interval $\eta \in [0,H]$ further damps high-frequency components, while the symmetric version $\eta \in [-H,H]$ completely removes linear bias and provides robust, phase-consistent filtering without compromising the derivative-like character of the operator.
\begin{table}
	\footnotesize
	\caption{Comparison of Fourier derivative, single CSIT, double-averaged CSIT, and symmetric double-averaged CSIT.}
	\label{tb:derivative_comparison_clean}
	\begin{tabularx}{\linewidth}{lXXXX}
		\toprule
		Property & Fourier derivative & Single CSIT ($H=0$) & Double-averaged CSIT ($H>0$, $\eta \in [0,H]$) & Symmetric double-averaged CSIT ($H>0$, $\eta \in [-H,H]$) \\
		\midrule
		Taylor expansion & Exact for periodic functions, $f'(x)$ & $f'(x) + \mathcal{O}(Z^2)$ & $f'(x) + \frac{H}{2} f''(x) + \mathcal{O}(H^2 + Z^2)$ & $f'(x) + \mathcal{O}(H^2 + Z^2)$ \\
		Fourier factor & $i k$ & $\mathrm{Shi}(Z k)$ & $\mathrm{Shi}(Z k) \frac{\sin(kH)}{kH}$ & $\mathrm{Shi}(Z k) \frac{\sin(kH)}{kH}$ \\
		Effect on phase & None & $\pi/2$ Hilbert-like shift & $\pi/2$ shift, amplitude smoothed by $\sin(kH)/(kH)$ & Same as double-averaged, symmetric $\eta$ averaging removes linear bias \\
		Numerical bias & Neutral & Mild smoothing, derivative-like & Linear bias $\propto H/2$ & Linear bias removed by symmetric $\eta$ averaging \\
		Grid requirement & Uniform, periodic & Uniform preferred for FFT, non-uniform possible with interpolation & Same as single CSIT & Same as single CSIT \\
		Periodicity requirement & Required & Not required (analytic continuation) & Not required & Not required \\
		Implementation & FFT-based, simple & Requires FFT/IFFT with complex-step & Double loop over $\eta$ and $\tau$, 2D quadrature weights & Same as double-averaged \\
		Parasitic mode control & None & Mild suppression & Moderate suppression via [0,H] averaging & Strong suppression via symmetric $\eta$ averaging \\
		\bottomrule
	\end{tabularx}
\end{table}

In physical terms, the double-averaged CSIT can be interpreted as a multi-scale analytic continuation in the complex plane, combining the diffusive damping of the imaginary shift with the dispersive averaging of the real shift. This results in a smoother, bias-free approximation of the spatial derivative, which retains phase accuracy while naturally suppressing non-physical high-wavenumber modes.

From a computational perspective, CSIT maintains the spectral efficiency of FFT-based schemes but extends their applicability to non-periodic or weakly smooth domains. The symmetric averaging provides a tunable filter width $H$ that can be chosen proportionally to the grid spacing, balancing bias removal and mode suppression without introducing significant numerical diffusion.

In general, the symmetric double-averaged CSIT formulation represents an alternative to the Fourier derivative: it preserves spectral convergence while introducing intrinsic stabilization and bias control through analytic averaging in both real and imaginary directions.

\subsection{Interpretation of the CSIT-Based Instantaneous Frequency}

The CSIT instantaneous frequency does not merely compute a derivative of the phase like the classical Hilbert-based method; rather, it effectively performs a small complex-step \textit{probe} of the signal. This means that, instead of differentiating a noisy phase directly, CSIT measures how the complex phase behaves under a small complex perturbation. This arises due to the introduction of the imaginary shift $v$ and the real shift $h$, which provide several advantages:
\begin{itemize}
	\item \textbf{Complex smoothing:} The imaginary step $v$ acts as a regularized derivative over a small neighborhood in the complex plane, suppressing spurious spikes and small parasitic oscillations that often appear in the Hilbert-based instantaneous frequency.
	
	\item \textbf{Amplitude-aware:} The CSIT derivative naturally weights the calculation by the local amplitude. As a result, regions near maxima or envelope peaks are more clearly represented, yielding smoother and more accurate maxima compared to the sometimes noisy Hilbert-based results.
	
	\item \textbf{Bias removal:} The real shift $h$ acts as a local averaging parameter, reducing numerical artifacts near zero crossings without requiring any artificially imposed damping.
	
	\item \textbf{Local curvature interpretation:} Peaks in the original signal correspond to local phase curvature in the complex plane. The CSIT captures this more faithfully because it measures how the complex argument ``bends'' under small shifts, resulting in clearer representation of maxima and minima in the instantaneous frequency plot.
\end{itemize}

Therefore, rather than being a mere ``filtered version'' of the signal, the CSIT instantaneous frequency provides a physically meaningful regularization. It highlights the true phase dynamics, including maxima, minima, and local phase reversals, while effectively suppressing spurious numerical effects.

\section{Conclusions}

\paragraph{Theoretical Foundation and Generalization of the Complex-Step Method (CSIT)}  
	The Complex-Step Integral Transofrm (CSIT) combines the Complex-Step method proposed by \cite{Abreu201384} and the concept of the Hilbert transform. The CSIT provides a robust tool for numerical differentiation that avoids the pitfalls of traditional finite-difference methods, and particularly in the context of spectral methods. Its generalization through complex steps enables higher-order accuracy with fewer errors, particularly in the treatment of small perturbations in the complex plane.
	
\paragraph{Spectral Interpretation of the CSIT Operator}  
	The CSIT behaves like a derivative operator with the added benefits of spectral filtering and phase preservation. The operator exhibits derivative-like behavior, with a low-pass filtering effect that reduces parasitic oscillations and grid-scale noise. This makes CSIT particularly useful in simulations that involve high-frequency components, where traditional numerical differentiation methods can fail. The CSIT shows its potential to smooth out high-wavenumber components and provide accurate derivative approximations with minimal distortion when solving the advection equation.
	
\paragraph{Practical Considerations for Grid Spacing and Truncation Parameters}  
	The parameters $H$ and $Z$, with $H,Z\in\mathbb{R}^+$, control the real and imaginary steps, respectively, of the CSIT. They must be carefully chosen relative to the grid spacing to balance accuracy, stability, and computational efficiency. Proper tuning of these parameters ensures second-order accuracy while preventing numerical instabilities or excessive smoothing. A symmetric form of averaging over $\eta \in [-H,H]$ is recommended to eliminate bias and improve the overall robustness of the method.
	
\paragraph{Comparison of FFT and Interpolation Approaches}  
	The CSIT can be implemented using either FFT-based or interpolation-based methods, each with distinct advantages and trade-offs. The FFT-based approach provides spectral accuracy and computational efficiency, especially for periodic problems. However, for non-periodic or heterogeneous domains, interpolation-based implementations may offer greater flexibility, though at the cost of reduced convergence rate and increased computational complexity. Future work could explore the potential of interpolation-based CSIT formulations for non-periodic or more complex geometries and different numerical techniques such as the Finite-Element method is an ongoing work.
	
\paragraph{Advantages over Classical Fourier Derivatives in Advection Problems}  
	The CSIT provides a significant improvement over traditional Fourier derivatives, particularly in advection problems. The CSIT suppresses spurious oscillations and improves stability in under-resolved fields, where conventional Fourier methods suffer from numerical ringing. The double-averaged CSIT formulation, in particular, introduces controlled damping of high-frequency modes while preserving phase accuracy. This makes it a powerful tool for simulating physical systems governed by advection-dominated partial differential equations.
	
\paragraph{Improvement in Instantaneous Frequency Calculation}  
	The CSIT-based method for calculating instantaneous frequency provides a more reliable and physically meaningful representation of phase dynamics compared to classical Hilbert transform-based methods. By introducing both real and imaginary shifts, CSIT naturally smooths the signal and highlights important features such as local maxima and phase reversals. This makes it particularly advantageous for signal analysis in noisy or complex datasets, where the Hilbert transform may fail to provide accurate results.
	
\paragraph{Future Directions}  
	While the CSIT has proven effective in many contexts, further exploration is needed to assess its performance in more complex settings, such as non-uniform grids, heterogeneous materials, or irregular boundary conditions. Additionally, the development of interpolation-based CSIT formulations for non-periodic and spatially heterogeneous domains could open up new possibilities for applications in computational physics, engineering, and signal processing.

\section{Acknowledgments}

R.A. acknowledges initial constructive conversations with Angie Pineda and Fabian Bonilla.   

\section{Data availability}

Data used in this work are available and have been downloaded using Obspy \citep{Krischer2015} from IRIS. 

\footnotesize
\bibliographystyle{apalike}
\bibliography{Biblio}

\end{document}